\newcommand\cyr
\renewcommand\rmdefault{wncyr}
\renewcommand\sfdefault{wncyss}
\renewcommand\encodingdefault{OT2}
\DeclareTextFontCommand{\textcyr}{\cyr}
\newtheorem{theorem}{Theorem}%[section]
\newtheorem{corollary}[theorem]{Corollary}
\newtheorem{definition}[theorem]{Definition}
\newtheorem{remark}[theorem]{Remark}
\chardef\bslash=`\\ % p. 424, TeXbook
\newcommand{\bbR}{{\mathbb{R}}}
\newcommand{\bbC}{{\mathbb{C}}}
\newcommand{\ran}{\text{\rm{Ran}}}
\newcommand{\calD}{{\mathcal D}}
\newcommand{\calH}{{\mathcal H}}
\newcommand{\calN}{{\mathcal N}}
\newcommand{\calS}{{\mathcal S}}
\renewcommand{\Im}{\text{\rm Im}}
\def\sM{{\mathfrak M}}
      \def\dC{{\mathbb C}}
   \def\cN{{\mathcal N}}   
\def\cS{{\mathcal S}}
\def\RE{{\rm Re\,}}
\DeclareMathOperator{\IM}{Im}
\newcommand{\eval}[2][\right]{\relax
  \ifx#1\right\relax \left.\fi#2#1\rvert}
\begin{document}

\title{L-systems with Multiplication Operator and c-Entropy}

%    Information for first author
\author{S. Belyi}
\address{Department of Mathematics\\ Troy University\\
Troy, AL 36082, USA\\
%URL: {\sf http://spectrum.troy.edu/$\sim$belyi/}
}
\curraddr{}
\email{sbelyi@troy.edu}
%\thanks{Thank you.}

%    Information for second author

\author[Makarov]{K. A. Makarov}
\address{Department of Mathematics\\
 University of Missouri\\
  Columbia, MO 63211, USA}
\email{makarovk@missouri.edu}

%\thanks{The second author was partially supported by the Simons collaboration grant  00061759  while preparing this paper.}

%    Information for third author
\author{E. Tsekanovskii}
\address{Department of Mathematics, Niagara University,  Lewiston,
NY  14109, USA} \email{\tt tsekanov@niagara.edu}

%\thanks{Thank you.}

%    General info
\subjclass{Primary 47A10; Secondary 47N50, 81Q10}
\date{DD/MM/2004}

%\dedicatory{Dedicated with great pleasure to Henk de Snoo on the occasion of his $80^{th}$ birthday}

\keywords{L-system, transfer function, impedance function,  Herglotz-Nevan\-linna function, Donoghue class, c-entropy, dissipation coefficient, multiplication operator}

\begin{abstract}

In this note, we utilize the concepts of c-entropy and the dissipation coefficient in connection with canonical L-systems based on the multiplication (by a scalar) operator. Additionally, we examine the coupling of such L-systems and derive explicit formulas for the associated  c-entropy and dissipation coefficient.  In this context,  we also  introduce the concept of a skew-adjoint L-system  and analyze its coupling with the original L-system.

 \end{abstract}

\maketitle

\tableofcontents

%%%%%%%%%%%%%%%%%

%%%%%%%%%%%%%%%%%%%
\section{Introduction}\label{s1}

This paper is dedicated to the exploration of the relationships between various subclasses of Herglotz-Nevanlinna functions and their conservative realizations as impedance functions of L-system (see \cite{ABT,BMkT,BMkT-2,BMkT-3,BT-21,Lv2}).

Recall the concept of a canonical L-system.

Let $T$ be a bounded linear operator in a Hilbert space $\calH$. Suppose, in addition, that
 $E$ is another  finite-dimensional  Hilbert space, $\dim E<\infty$.
By  a  \textit{canonical L-system} we mean the array
\begin{equation}
\label{col0}
 \Theta =
\left(%
\begin{array}{ccc}
  T    & K & J \\
   \calH&  & E \\
\end{array}%
\right),
\end{equation}
where $K\in[E,\calH]$,  $J$ is a self-adjoint isometry in $E$ such that   $\IM T=KJK^*$.
%The operator $T$ above is the \textit{main operator} of the L-system $\Theta$ that defines $\Theta$ uniquely together with the \textit{quasi-kernel} $\hat A$ that is the self-adjoint extension of $\dA$ such that $\RE\bA\supset\hat A=\hat A^*\supset\dA$.

Recall that the operator-valued function
\begin{equation*}\label{W1}
 W_\Theta(z)=I-2iK^*(T-zI)^{-1}KJ,\quad z\in \rho(T),
\end{equation*}
 is called the \textit{transfer function}  of an L-system $\Theta$, while
\begin{equation*}\label{real2}
 V_\Theta(z)=i[W_\Theta(z)+I]^{-1}[W_\Theta(z)-I]J =K^*(\RE T-zI)^{-1}K,\quad z\in\rho(T)\cap\dC_{\pm},
\end{equation*}
is {}{known  as}  the \textit{impedance function } of $\Theta$. The formal definition of  L-systems and their elements are presented in Section \ref{s2}.

The main goal of this note is to  apply the concepts of  c-entropy and dissipation coefficient
{}{ introduced in \cite{BT-16} and \cite{BT-21} }
to canonical  L-systems based on the multiplication operator.

The paper is organized as follows.

Section \ref{s2} contains necessary information on the L-systems theory.

In Section \ref{s3}, we review the formal definition and discuss the fundamental properties of regular and generalized Donoghue classes with {a} bounded,  compactly supported  representing measure. In particular, we emphasize the connection between the analytical theory of electrical circuits and the subclasses of rational functions belonging to the Donoghue classes (see Remark \ref{zepi}).

Sections \ref{s4}--\ref{s7} of the paper contain the  main results.

Section \ref{s4} {}{describes} a detailed construction of a canonical L-system associated with the multiplication by a scalar operator.  In this section, we also establish criteria for the impedance function of such an L-system to belong to the Donoghue classes with { a bounded representing measure.}

In Section \ref{s5}, we revisit the concept of L-system coupling  (see \cite{Bro}, \cite{ABT}), and apply it to the canonical L-systems with  the multiplication operators introduced in Section \ref{s4}. We derive the explicit form of the coupling between two canonical L-systems (with multiplication operators) and {}{give an independent proof of  the Multiplication Theorem  (see, e.g., \cite{Bro,MT10}) stating that the transfer function of this coupling is equal to the product of the transfer functions associated with the factor L-systems.
As far as  the corresponding impedance functions  are concerned, we  demonstrate that when the coupling of two L-systems, whose impedance functions belong to  Donoghue classes, is formed, then  the impedance of the resulting L-system also belongs to the Donoghue class. Thus, the invariance {(persistence)} of the Donoghue classes with respect to the coupling operation is established.

In Section \ref{s6}, we revisit the definition of c-entropy and the dissipation coefficient, focusing specifically on their application to canonical L-systems with multiplication operators.
We also obtain explicit representations for {both  c-entropy} and the dissipation coefficient, and show that c-entropy is additive with respect to the coupling of two canonical L-systems (with multiplication operators).

In Section \ref{s7}, we introduce the skew-adjoint L-systems associated with the canonical L-systems defined by multiplication operators and examine the properties of c-entropy and dissipation coefficient in this setting (also see Remark \ref{zepi2} for the  ``inductor-capacitor network'' interpretation for  the coupling between  two elementary L-systems).

All the new results {presented  are} summarized in Tables \ref{Table-1} and \ref{Table-2}. The paper concludes with illustrative examples that demonstrate the constructions discussed.

\section{Preliminaries}\label{s2}

For a pair of Hilbert spaces $\calH_1$, $\calH_2$ denote by $[\calH_1,\calH_2]$ the set of all bounded linear operators from $\calH_1$ to $\calH_2$.

Let $T$ be a bounded linear operator in a Hilbert space $\calH$, $K\in[E,\calH]$, and $J$ be a bounded, self-adjoint, and unitary operator in $E$, where $E$ is another Hilbert space with $\dim E<\infty$. Let also
\begin{equation}\label{e5-3}
\IM T=KJK^*.
\end{equation}

By definition, the  \textbf{Liv\v{s}ic canonical
system } or simply \textbf{canonical L-system} is just the  array
\begin{equation}
\label{BL}
 \Theta =
\left(%
\begin{array}{ccc}
  T & K & J \\
  \calH &  & E \\
\end{array}%
\right).
 \end{equation}
 The spaces $\calH$ and $E$ here  are
called \textit{state} and
\textit{input-output} spaces, and the operators
$T$, $K$, $J$ will be refered to as \textit{main},
\textit{channel}, and \textit{directing} operators, respectively.

Notice that
relation
\eqref{e5-3}
implies
\begin{equation}\label{e5-4}
    \ran (\IM T)\subseteq\ran(K).
\end{equation}

We  associate with an L-system $\Theta$ two  analytic functions,  the \textbf{transfer  function} of the L-system $\Theta$
\begin{equation}\label{e6-3-3}
W_\Theta (z)=I-2iK^\ast (T-zI)^{-1}KJ,\quad z\in \rho (T),\quad {\text{the resolvent set, }}
\end{equation}
and also the \textbf{impedance function}  given by the formula
\begin{equation}\label{e6-3-5}
V_\Theta (z) = K^\ast (\RE T - zI)^{-1} K, \quad z\in  \rho (\RE T).
\end{equation}

 The transfer function $W_\Theta (z)$ of the L-system $\Theta $ and function $V_\Theta (z)$ of the form (\ref{e6-3-5}) are connected by the following relations valid for $\IM z\ne0$, $z\in\rho(T)$,
\begin{equation}\label{e6-3-6}
\begin{aligned}
V_\Theta (z) &= i [W_\Theta (z) + I]^{-1} [W_\Theta (z) - I]J,\\
W_\Theta(z)&=(I+iV_\Theta(z)J)^{-1}(I-iV_\Theta(z)J).
\end{aligned}
\end{equation}

%\textcolor{red}{
Recall that  the  impedance function $V_\Theta(z)$ of a canonical L-system admits the  integral representation (see, e.g.,  \cite[Section 5.5]{ABT}, \cite{Bro})
\begin{equation}\label{hernev-real}
V_\Theta(z)=\int_\bbR \frac{d\sigma(t)}{{t}-z},
\end{equation}
%\begin{equation}\label{hernev-real}
%V_\Theta(z)=Q+\int_\bbR \left(\frac{1}{{t}-z}-\frac{{t}}{1+{t}^2}\right)d\sigma,
%\end{equation}
where  $\sigma$ is an   operator-valued bounded Borel measure in $E$ with the compact support on $\bbR$.
%}
%such that
%\begin{equation}\label{e-7-measure}
%\int_\bbR\frac{(d\sigma({t})f,f)_E}{1+{t}^2}<\infty \quad \textrm{and }\quad  %\int_\bbR{(d\sigma({t})f,f)_E}<\infty,\quad \forall f\in E.
%\end{equation}

As far as the inverse problem is concerned,  we refer to  \cite{ABT,BMkT,GT} and references therein for the description
of the class of all Herglotz-Nevanlinna functions that admit  realizations as impedance  functions of an L-system.

\section{Bounded Donoghue classes}\label{s3}

%\textcolor{red}{
Denote by  $\widehat{\calN}$  the  class of all scalar Herglotz-Nevanlinna functions $M(z)$ that admit the representation \eqref{hernev-real}
%\begin{equation}\label{hernev-0}
%M(z)=\int_\bbR \left(\frac{1}{{t}-z}-\frac{{t}}{1+{t}^2}\right)d\sigma,
%\end{equation}
where $\sigma$ is a compactly supported  bounded Borel measure such that
\begin{equation}\label{e-9-cond}
\int_\bbR\frac{t}{1+{t}^2}\,d\sigma({t})=0.
\end{equation}
Observe that for a function $M(z)\in\widehat{\calN}$ condition \eqref{e-9-cond} implies
$$
M(i)=\int_\bbR \frac{d\sigma(t)}{t-i}={\int_\bbR \frac{t}{t^2+1}\,d\sigma(t)+i\int_\bbR \frac{d\sigma(t)}{t^2+1}}=i\int_\bbR \frac{d\sigma(t)}{t^2+1}.
$$

 Following the approach of  \cite{BMkT,BMkT-3,MT10,MT2021}, given   $0\le\kappa<1$,  denote by $\widehat{\sM}$,  $\widehat{\sM_\kappa}$ and  $\widehat{\sM_\kappa^{-1}}$  the subclass of $\widehat{\calN}$     satisfying the additional requirement  that
\begin{equation}\label{e-42-int-don}
\int_\bbR\frac{d\sigma({t})}{1+{t}^2}=1\,,\quad\text{equivalently,}\quad M(i)=i,
\end{equation}
\begin{equation}\label{e-38-kap}
\int_\bbR\frac{d\sigma({t})}{1+{t}^2}=\frac{1-\kappa}{1+\kappa}\,,\quad\text{equivalently,}\quad M(i)=i\,\frac{1-\kappa}{1+\kappa},
\end{equation}
and
\begin{equation}\label{e-39-kap}
\int_\bbR\frac{d\sigma({t})}{1+{t}^2}=\frac{1+\kappa}{1-\kappa}\,,\quad\text{equivalently,}\quad M(i)=i\,\frac{1+\kappa}{1-\kappa},
\end{equation}
respectively. Clearly,  $$\widehat{\sM}=\widehat{\sM_0}=\widehat{\sM_0^{-1}}.$$

It is easy to see that  if $M(z)$ is an arbitrary function from the class $\widehat{\calN}$ such that
\begin{equation}\label{e-66-L}
\int_\bbR\frac{d\sigma({t})}{1+{t}^2}=a \quad\text{for some}\quad  a>0,
\end{equation}
 then  $M\in\widehat{\sM}$ if and only if $a=1$.

The membership of $M\in \widehat{\cN}$ in the other generalized Donoghue classes   $ \widehat{\sM_\kappa} $  and   $\widehat{\sM_\kappa^{-1}}$ can
also be described straightforwardly as follows:
 \begin{enumerate}
\item[] if $a<1$, then $M\in \widehat{\sM_\kappa}$ with
\begin{equation}\label{e-45-kappa-1}
\kappa=\frac{1-a}{1+a},
\end{equation}
\item[]and
\item[]if $a>1$, then $M\in \widehat{\sM_\kappa^{-1}}$ with
\begin{equation}\label{e-45-kappa-2}
\kappa=\frac{a-1}{1+a}.
 \end{equation}
 \end{enumerate}

\begin{remark}\label{zepi} We note that rational functions from Donoghue classes
{}{taking purely imaginary values on the imaginary axis} can be characterized as follows.
Suppose  that a rational function  is given by
$$M(z)=-\frac{a_0}z+\sum_{k=1}^n a_k\frac{z}{b_k^2-z^2},\quad z\in \bbC_+,
$$
where $a_0\ge 0$ and  $a_k>0$, $b_k>0$, $k=1,\dots, n$.
Clearly, the function $M(z)$ admits the representation \eqref{hernev-real}
$$
M(z)=\int_{\bbR}{\frac{d\sigma(t)}{t-z}},
$$
where $\sigma(dt)$ is a singular (atomic) measure supported at the points $-b_n, \dots,-b_1$, $0$, $b_1,\dots, b_n$ with the weights  $\frac 12 a_n, \dots,\frac 12 a_1,a_0, \frac 12 a_1,\dots, \frac 12 a_n$, respectively.
In particular,   condition \eqref{e-9-cond} holds, that is,
$$
\int_\bbR\frac{t}{1+{t}^2}\,d\sigma({t})=0.
$$
Moreover,
$$M(i)=i\left (a_0+\sum_{k=1}^n\frac{a_k}{b_k^2+1}\right)\in i\mathbb{R}_+.
$$
Therefore,  the rational function $M(z)$ belongs to one of  the Donoghue classes
 $\widehat{\sM}$,  $\widehat{\sM_\kappa}$ and  $\widehat{\sM_\kappa^{-1}}$ depending on whether
 $$a=a_0+\sum_{k=1}^n\frac{a_k}{b_k^2+1}$$
 equals to, less than, or greater than one, respectively.
 Notice that the rational function
$$
Z(p)=\frac{1}{i}M(ip)=\frac{a_0}{p}+\sum_{k=1}^n  a_k\frac{p}{b_k^2+p^2}$$
is a positive analytic function in the right half-plane,
that is, $$Z(p)+\overline{Z(p)}>0, \quad \RE(p)>0,
$$
takes real values on the positive semi-axis and has  purely imaginary values on the imaginary axis
(for  the definition of positive functions and their role in designing
 (synthesis of) electrical circuits, we refer to \cite{EfimPotap73}).

Since
$$
Z(p)=
\frac{1}{\frac1{a_0} p}+\sum_{k=1}^n
\frac{1}
{\left (L_kp\right )^{-1}
+\left (\frac{1}{C_kp}\right )^{-1}},
$$
where
$$L_k=\frac{a_k}{b_k^2} \quad \text{and}\quad C_k=\frac{1}{a_k},
$$
the  function $Z(p)$ determines the impedance of a series connection of a capacitor $C_0=\frac1{a_0}$ and $n$ parallel $LC$-circuits
with inductance $L_k$ and capacitance $C_k$, $k=1,\dots, n$. We remark that if $a_0=0$,
  the purely capacitive element of the circuit is  absent in this case.   Figure \ref{fig-2} shows an electric circuit consisting of a single capacitor and two $LC$-circuits connected in series.
\end{remark}

%%%%%%%%%%%
\begin{figure}
  % Requires \usepackage{graphicx}
  \begin{center}
 \includegraphics[width=90mm]{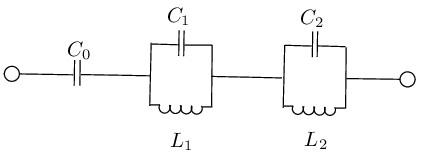}
  \caption{Three-stage Series Electrical Circuit }\label{fig-2}
  \end{center}
\end{figure}
%%%%%%%%%%%

\section{L-systems with multiplication operator}\label{s4}

Let $\calH$ be a {\it one} dimensional Hilbert space with an inner product $(\cdot,\cdot)$ and a normalized basis vector $h_0\in \calH$, ($\|h_0\|=1$). For a fixed number $\lambda_0\in\dC$ with $\IM\lambda_0>0$ we introduce (see also \cite{Bro}) a linear operator
\begin{equation}\label{e-d-4-30}
    T h=\lambda_0 h,\quad h\in\calH.
\end{equation}
Clearly,
$$
T^* h=\bar\lambda_0 h,\quad\IM T h=\IM\lambda_0 h,\quad\textrm{ and }\quad \RE T h=\RE \lambda_0 h,\quad \forall h\in\calH.
$$
We are going to include $T$ into a simple L-system $\Theta$, i.e., construct an L-system where $\calH$ is the state space and $T$ is the main operator (see \cite{Bro}, \cite{BT-21}). Let $K:\dC\rightarrow\calH$ be such that
\begin{equation}\label{e-4-31}
   K c=(\sqrt{\IM\lambda_0}\, c) h_0,\quad \forall c\in\dC.
\end{equation}
Then, the adjoint operator $K^*:\calH\rightarrow\dC$ is
\begin{equation}\label{e-4-32}
   K^* h=(h, \sqrt{\IM\lambda_0}\,h_0),\quad \forall h\in\calH.
\end{equation}
Note that for an arbitrary element $h=C_h h_0\in\calH$, ($C_h\in\dC$) formulas \eqref{e-4-31} and \eqref{e-4-32} imply
$$
 K^* h= K^* (C_h h_0)=C_h\sqrt{\IM\lambda_0}\quad \mathrm{and} \quad K\, C_h=(\sqrt{\IM\lambda_0})h.
$$
Therefore,
$$
K\,K^* h=K(C_h\sqrt{\IM\lambda_0})=(\IM\lambda_0)h.
$$
  Furthermore,
\begin{equation}\label{e-4-33}
    \IM T h=\IM\lambda_0 h=(\sqrt{\IM\lambda_0}) (h, \sqrt{\IM\lambda_0}\,h_0)h_0=K\,K^* h,\; \forall h\in\calH.
\end{equation}
Thus, we can construct an L-system of the form
\begin{equation}\label{e-4-34}
    \Theta= \begin{pmatrix} T&K&\ 1\cr  \calH & &\dC\cr \end{pmatrix},
\end{equation}
where operators $T$ and $K$ are defined by \eqref{e-d-4-30} and \eqref{e-4-31}, respectively. Taking into account that
$$
(T-zI)^{-1}=\frac{1}{\lambda_0-zI},
$$
for  the transfer function $W_\Theta(z)$ we have
\begin{equation}\label{e-4-35}
    W_\Theta (z)=I-2iK^\ast (T-zI)^{-1}K=1-2i\frac{\IM \lambda_0}{\lambda_0-z}=\frac{\bar\lambda_0-z}{\lambda_0-z}.
\end{equation}
The corresponding impedance function  \eqref{e6-3-5} is easily found to be   given by
\begin{equation}\label{e-4-36}
    V_\Theta (z)=K^\ast (\RE T - zI)^{-1} K=\frac{\IM\lambda_0}{\RE\lambda_0-z}.
\end{equation}
By inspection, one confirms that $V_\Theta (z)$ is a Herglotz-Nevanlinna function.

The following result provides conditions on when the impedance function of the form \eqref{e-4-36} belongs to the Donoghue classes described in Section \ref{s3}.
\begin{theorem}\label{t-6}
Let $\Theta$ be an L-system \eqref{e-4-34} with the main operator $T$ of the form \eqref{e-d-4-30}. Then  the impedance function $V_{\Theta}(z)$ belongs to the class:
\begin{enumerate}
  \item  $\widehat{\sM}$ if and only if $\lambda_0=i$;
  \item $\widehat{\sM}_\kappa$ if and only if $\lambda_0=ai$, $0<a<1$;
  \item $\widehat{\sM}_\kappa^{-1}$ if and only if $\lambda_0=ai$, $a>1$.
\end{enumerate}
\end{theorem}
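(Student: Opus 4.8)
The plan is to read off the representing measure of $V_\Theta$ directly from the explicit formula \eqref{e-4-36} and then apply the classification of the bounded Donoghue classes established in Section \ref{s3}. Comparing $V_\Theta(z)=\frac{\IM\lambda_0}{\RE\lambda_0-z}$ with the integral representation \eqref{hernev-real}, I would identify the (unique) representing measure as the single atom $\sigma=(\IM\lambda_0)\,\delta_{\RE\lambda_0}$ of mass $\IM\lambda_0>0$ located at the point $\RE\lambda_0$; uniqueness of the Herglotz--Nevanlinna measure makes this identification legitimate.

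The first substantive step is to determine when $V_\Theta$ belongs to $\widehat{\calN}$ at all, since all three target classes are subclasses of $\widehat{\calN}$. For the atomic measure $\sigma$ the defining condition \eqref{e-9-cond} reads
$$\int_\bbR\frac{t}{1+t^2}\,d\sigma(t)=\IM\lambda_0\cdot\frac{\RE\lambda_0}{1+(\RE\lambda_0)^2}=0.$$
Because $\IM\lambda_0>0$, this holds if and only if $\RE\lambda_0=0$, i.e. $\lambda_0=ai$ with $a=\IM\lambda_0>0$. This already delivers the ``only if'' half of all three assertions: membership in any of the Donoghue classes forces $\lambda_0$ to lie on the positive imaginary axis.

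Assuming $\lambda_0=ai$, the measure becomes $\sigma=a\,\delta_0$, so the normalization integral appearing in \eqref{e-66-L} evaluates to
$$\int_\bbR\frac{d\sigma(t)}{1+t^2}=a,$$
equivalently $V_\Theta(i)=\frac{a}{-i}=ia$. It then remains only to invoke the classification recorded after \eqref{e-66-L}: the value $a=1$ (that is, $\lambda_0=i$) characterizes $\widehat{\sM}$ via \eqref{e-42-int-don}; the range $0<a<1$ places $V_\Theta$ in $\widehat{\sM}_\kappa$ with $\kappa=\frac{1-a}{1+a}$ by \eqref{e-45-kappa-1}; and $a>1$ places $V_\Theta$ in $\widehat{\sM}_\kappa^{-1}$ with $\kappa=\frac{a-1}{1+a}$ by \eqref{e-45-kappa-2}. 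Since each admissible normalization value is attained by exactly one $a>0$, the ``if'' directions follow as well, completing the three equivalences.

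I do not expect a serious obstacle here: the entire argument is the recognition that the impedance function is a single Herglotz atom, together with the bookkeeping of Section \ref{s3}. The only point requiring a moment's care is the ``only if'' direction, where one must observe that the generalized Donoghue classes sit inside $\widehat{\calN}$, so that condition \eqref{e-9-cond} is already in force and immediately pins the atom to the origin; everything else reduces to the elementary evaluation of two integrals and the algebra relating $a$ to $\kappa$.
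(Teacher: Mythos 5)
Your proposal is correct and follows essentially the same route as the paper: the paper likewise reduces everything to evaluating $V_\Theta(i)$ via \eqref{e-4-37} and matching the result against \eqref{e-42-int-don}--\eqref{e-45-kappa-2}, obtaining $\RE\lambda_0=0$ from the vanishing of $\RE V_\Theta(i)$ and the class from $a=\IM\lambda_0$. Your only (harmless) variation is to make the representing measure $(\IM\lambda_0)\,\delta_{\RE\lambda_0}$ explicit and verify \eqref{e-9-cond} on it directly, which is in fact slightly more careful than the paper's parenthetical remark that \eqref{e-9-cond} holds automatically.
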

\begin{proof}
Let us prove (1) first. It is well known (see \cite{ABT}) that the impedance function $V_\Theta(z)$ of any L-system is a Herglotz-Nevanlinna function and hence has integral representation \eqref{hernev-real}. Consequently,
$V_\Theta(z)$ of the form \eqref{e-4-36}  belongs to the class $\widehat{\sM}$ if and only if $V_\Theta(i)=i$ (condition \eqref{e-9-cond} is satisfied automatically). Using \eqref{e-4-36} yields
\begin{equation}\label{e-4-37}
 V_\Theta(i)=\frac{\IM\lambda_0}{\RE\lambda_0-i}=\frac{\IM\lambda_0\cdot\RE\lambda_0+i\IM\lambda_0}{(\RE\lambda_0)^2+1}
 =\frac{\IM\lambda_0\cdot\RE\lambda_0}{(\RE\lambda_0)^2+1}+i\frac{\IM\lambda_0}{(\RE\lambda_0)^2+1}.
\end{equation}
This expression on the right equals $i$ if and only if $\RE\lambda_0=0$ and $\IM\lambda_0=1$, i.e., $\lambda_0=i$. This proves (1).

In order to prove (2), we compare \eqref{e-4-37} to formulas \eqref{e-38-kap}, \eqref{e-39-kap}, and \eqref{e-66-L}. This comparison yields that $V_\Theta(z)$ of the form \eqref{e-4-36} belongs to the class $\widehat{\sM}_\kappa$ if
$$
V_\Theta(i)=ai=(\IM\lambda_0)i, \quad 0<a=\IM\lambda_0<1.
$$
Therefore, $V_\Theta(z)\in\widehat{\sM}_\kappa$ if and only if $\RE\lambda_0=0$ and $\IM\lambda_0<1$. Thus, $\lambda_0=ai$, $0<a<1$. Also, in this case
\begin{equation}\label{e-4-38}
    \kappa=\frac{1-\IM\lambda_0}{1+\IM\lambda_0}.
\end{equation}

Assertion (3) is proved similarly to (2) with an assumption that $a=\IM\lambda_0$ is greater than 1. Moreover, here we have $V_\Theta(z)\in\widehat{\sM}_\kappa^{-1}$ with
\begin{equation}\label{e-4-39}
    \kappa=\frac{\IM\lambda_0-1}{1+\IM\lambda_0}.
\end{equation}
\end{proof}

\begin{table}[ht]
\centering
\begin{tabular}{|c|c|c|c|}
\hline
 &  &  &\\
 \textbf{$\lambda_0$}& $a$  & $\kappa$  & \textbf{Class}  \\
  &  & &\\ \hline%\hline
% &  &  &\\
&  &  &\\
  $i$ & $a=1$ & $\kappa=0$ &$V_{\Theta}(z)\in\widehat{\sM}$\\
% &  &  &\\
  &  &  & \\  \hline
%  &  &  &\\
% &  & &\\
 &  &  &\\
 $ai$&  $0<a<1$& $\kappa=\frac{1-a}{1+a}$ &$V_{\Theta}(z)\in\widehat{\sM}_\kappa$\\
 &  &  & \\
  % &  & &\\
 % &  &  &\\
  \hline
% &  &  &\\
%&  & &\\
 &  &  &\\
  $ai$ &  $a>1$& $\kappa=\frac{a+1}{1+a}$ &$V_{\Theta}(z)\in\widehat{\sM}^{-1}_\kappa$\\
  &  &  & \\
   %&  & & \\
    % &  &  &\\
     \hline
     \multicolumn{1}{l}{} & \multicolumn{1}{l}{} & \multicolumn{1}{l}{} & \multicolumn{1}{l}{}
\end{tabular}
\caption{Impedance function $V_{\Theta}(z)$}
\label{Table-1}
\end{table}

The results of Theorem  \ref{t-6}  are summarized in  Table \ref{Table-1}.

\section{L-system coupling}\label{s5}

In this section, following \cite{Bro} (see also \cite{ABT} and \cite{BMkT-2}), we  introduce the coupling of two L-systems with multiplication operator of the form \eqref{e-4-34}.

 Let (as above) $\calH$ be  a one-dimensional Hilbert space with the inner product $(\cdot,\cdot)$ and a normalized basis vector $h_0\in \calH$, ($\|h_0\|=1$). Suppose, in addition, that    $T_1$ and $T_2$ are  multiplication operators in $\calH$ of the form \eqref{e-d-4-30} given by
 \begin{equation}\label{e-45-T12}
    T_1 h=\lambda_0 h\quad \textrm{and}\quad T_2 h=\mu_0 h,\quad h\in\calH.
\end{equation}
Consider two L-systems of the form \eqref{e-4-34} based on $T_1$ and $T_2$, respectively,
\begin{equation}\label{e-46-Theta12}
    \Theta_1= \begin{pmatrix} T_1&K_1&\ 1\cr  \calH & &\dC\cr \end{pmatrix}
\quad \textrm{and}\quad
\Theta_2= \begin{pmatrix} T_2&K_2&\ 1\cr  \calH & &\dC\cr \end{pmatrix},
\end{equation}
where $K_1:\dC\rightarrow\calH$ and $K_2:\dC\rightarrow\calH$
are such that
\begin{equation}\label{e-47-K12}
   K_1 c=(\sqrt{\IM\lambda_0}\, c) h_0,
   \quad \textrm{and}\quad
    K_2 c=(\sqrt{\IM\mu_0}\, c) h_0,
   \quad \forall c\in\dC.
\end{equation}
Recall \eqref{e-4-32} that  the adjoint operators $K_1^*:\calH\rightarrow\dC$ and $K_2^*:\calH\rightarrow\dC$ are
\begin{equation}\label{e-48-adj}
   K^*_1 h=(h, \sqrt{\IM\lambda_0}\,h_0),\quad K^*_2 h=(h, \sqrt{\IM\mu_0}\,h_0),\quad\forall h\in\calH.
\end{equation}
Define an operator $\mathbf{T}$ acting on the Hilbert space $\calH\oplus\calH$ as
\begin{equation}\label{e-48-bT}
  \begin{aligned}
   \mathbf{T}\left[
               \begin{array}{c}
                 h_1 \\
                 h_2 \\
               \end{array}
             \right]&=\left(
                       \begin{array}{cc}
                         T_1 & 2iK_1K_2^* \\
                         0 & T_2 \\
                       \end{array}
                     \right)\left[ \begin{array}{c}
                 h_1 \\
                 h_2 \\
               \end{array}
             \right]=\left[
               \begin{array}{c}
                 T_1 h_1+2i K_1K_2^*h_2 \\
                 T_2 h_2 \\
               \end{array}
             \right]\\
             &=\left[
               \begin{array}{c}
                 \lambda_0 h_1+2i  \sqrt{(\IM\lambda_0)\cdot(\IM\mu_0)}\,h_2\\
                 \mu_0 h_2 \\
               \end{array}
             \right],\quad\forall h_1,h_2\in\calH.
      \end{aligned}
   \end{equation}
Direct check reveals that
 \begin{equation}\label{e-50-bT-adj}
  \begin{aligned}
   \mathbf{T^*}\left[
               \begin{array}{c}
                 h_1 \\
                 h_2 \\
               \end{array}
             \right]&=\left(
                       \begin{array}{cc}
                         T_1^* & 0 \\
                         -2iK_2K_1^* & T_2^* \\
                       \end{array}
                     \right)\left[ \begin{array}{c}
                 h_1 \\
                 h_2 \\
               \end{array}
             \right]=\left[
               \begin{array}{c}
                 T_1^* h_1 \\
                 T_2^* h_2-2i K_2K_1^*h_1 \\
               \end{array}
             \right]\\
             &=\left[
               \begin{array}{c}
             \bar\lambda_0 h_1    \\
               \bar\mu_0 h_2-2i  \sqrt{(\IM\lambda_0)\cdot(\IM\mu_0)}\,h_1   \\
               \end{array}
             \right],\quad\forall h_1,h_2\in\calH.
      \end{aligned}
   \end{equation}
In addition to $\mathbf{T}$ we define an operator $\mathbf{K}:\dC\rightarrow\calH\oplus\calH$ as
\begin{equation}\label{e-50-bK}
    \mathbf{K}c=\left[
               \begin{array}{c}
                 K_1 c \\
                 K_2 c\\
               \end{array}
             \right]=\left[
               \begin{array}{c}
                 (\sqrt{\IM\lambda_0}\, c) h_0 \\
                (\sqrt{\IM\mu_0}\, c) h_0\\
               \end{array}
             \right].
\end{equation}
In this case the adjoint operator $\mathbf{K}^*:\calH\oplus\calH\rightarrow\dC$ is defined for all $h_1,h_2\in\calH$ as follows
\begin{equation}\label{e-51-b-adj}
    \mathbf{K}^*\left[
               \begin{array}{c}
                 h_1 \\
                 h_2 \\
               \end{array}
             \right]=K_1^*h_1+K_2^*h_2=(h_1, \sqrt{\IM\lambda_0}\,h_0)+(h_2, \sqrt{\IM\mu_0}\,h_0).%,\quad\forall h_1,h_2\in\calH.
\end{equation}
One can confirm that
\begin{equation}\label{e-53-ImbT}
     \begin{aligned}
    \IM \mathbf{T}\left[
               \begin{array}{c}
              h_1     \\
               h_2     \\
               \end{array}
             \right]&=
             \frac{1}{2i}\left(
                       \begin{array}{cc}
                         2i\IM T_1 & 2iK_1K_2^* \\
                         2iK_2K_1^* & 2i\IM T_2 \\
                       \end{array}
                     \right)
             \left[
               \begin{array}{c}
              h_1     \\
               h_2     \\
               \end{array}
             \right]\\
             &=\left[
               \begin{array}{c}
                K_1K_1^* h_1+K_1K_2^* h_2 \\
                K_2K_2^* h_2+K_2K_1^* h_1\\
               \end{array}
             \right]=\mathbf{K}\mathbf{K}^*\left[
               \begin{array}{c}
                 h_1 \\
                 h_2 \\
               \end{array}
             \right],
        \end{aligned}
\end{equation}
and hence $\IM \mathbf{T}=\mathbf{K}\mathbf{K}^*$.

%{}
{Summarizing, we arrive at the following definition.}
\begin{definition}\label{d-11}
%{}{
Given two systems of the form \eqref{e-46-Theta12}
$$
  \Theta_1= \begin{pmatrix} T_1&K_1&\ 1\cr  \calH & &\dC\cr \end{pmatrix}
\quad \textrm{and}\quad
\Theta_2= \begin{pmatrix} T_2&K_2&\ 1\cr  \calH & &\dC\cr \end{pmatrix},
$$
define the  \textbf{coupling of two L-systems} as
 \begin{equation*}
\Theta= \begin{pmatrix} \mathbf{T}&\mathbf{K}&\ 1\cr \calH\oplus\calH& &\dC\cr
\end{pmatrix},
\end{equation*}
where the operators $\mathbf{T}$ and $\mathbf{K}$  are presented in \eqref{e-48-bT}--\eqref{e-51-b-adj}.

In writing,
$$
\Theta=\Theta_1\cdot\Theta_2.
$$
\end{definition}

The following theorem can be derived from a more general result \cite[Theorem 3.1]{Bro},
{however, for {the} convenience of the reader, we provide a simple  proof of it.}
 \begin{theorem}[cf. \cite{Bro}]\label{unitar}
 Let an L-system $\Theta$ be the coupling of two L-systems $\Theta_1$
and $\Theta_2$ of the form \eqref{e-46-Theta12} with the main operators $T_1$ and $T_2$ given by \eqref{e-45-T12}. Then if $z\in\rho(T_1)\cap\rho(T_2)={\bbC\setminus\{\lambda_0,\mu_0\}}$, we have
\begin{equation}\label{e-91-mult}
W_\Theta(z)=W_{\Theta_1}(z)\cdot W_{\Theta_2}(z)=\frac{\bar\lambda_0-z}{\lambda_0-z}\cdot\frac{\bar\mu_0-z}{\mu_0-z}.
\end{equation}
\end{theorem}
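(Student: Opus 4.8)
The plan is to compute $W_\Theta(z)$ directly from its defining formula \eqref{e6-3-3}, which here reads $W_\Theta(z)=I-2i\mathbf{K}^*(\mathbf{T}-zI)^{-1}\mathbf{K}$ since the directing operator of the coupling is $J=1$, and then to verify the factorization by an elementary algebraic identity. The crucial structural feature is that $\mathbf{T}-zI$ is block \emph{upper triangular} on $\calH\oplus\calH$, so its resolvent is computable in closed form.

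First I would invert the block operator. From \eqref{e-48-bT} we have
\[
\mathbf{T}-zI=\begin{pmatrix} T_1-zI & 2iK_1K_2^* \\ 0 & T_2-zI\end{pmatrix},
\]
and the standard inversion formula for a block upper triangular matrix gives, for $z\in\rho(T_1)\cap\rho(T_2)$,
\[
(\mathbf{T}-zI)^{-1}=\begin{pmatrix} (T_1-zI)^{-1} & -2i(T_1-zI)^{-1}K_1K_2^*(T_2-zI)^{-1} \\ 0 & (T_2-zI)^{-1}\end{pmatrix}.
\]
Next I would sandwich this between $\mathbf{K}$ and $\mathbf{K}^*$ using \eqref{e-50-bK} and \eqref{e-51-b-adj}. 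Writing $A_j(z)=K_j^*(T_j-zI)^{-1}K_j$ for the scalar impedance-type quantities ($j=1,2$), the key point is that $K_2^*(T_2-zI)^{-1}K_2c=A_2(z)c$ is a scalar multiple of $c$, so it can be pulled out of the off-diagonal term. A direct computation then yields
\[
\mathbf{K}^*(\mathbf{T}-zI)^{-1}\mathbf{K}=A_1(z)+A_2(z)-2iA_1(z)A_2(z),
\]
where the cross term $-2iA_1A_2$ arises precisely from the off-diagonal resolvent block.

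Finally, since the transfer function of each factor is $W_{\Theta_j}(z)=1-2iA_j(z)$ by \eqref{e-4-35}, I would verify the identity
\[
1-2i\bigl(A_1+A_2-2iA_1A_2\bigr)=1-2iA_1-2iA_2-4A_1A_2=(1-2iA_1)(1-2iA_2),
\]
which establishes $W_\Theta(z)=W_{\Theta_1}(z)\cdot W_{\Theta_2}(z)$. Substituting the explicit scalar forms $W_{\Theta_1}(z)=\frac{\bar\lambda_0-z}{\lambda_0-z}$ and $W_{\Theta_2}(z)=\frac{\bar\mu_0-z}{\mu_0-z}$ from \eqref{e-4-35} then gives \eqref{e-91-mult}. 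The main obstacle is purely bookkeeping: one must carry the factor $2i$ from the off-diagonal block of $\mathbf{T}$ faithfully through the resolvent and confirm that, after multiplication by $-2i$ in the definition of $W_\Theta$, the cross term reproduces exactly the coefficient $(2i)^2A_1A_2=-4A_1A_2$ appearing in the expansion of the product $(1-2iA_1)(1-2iA_2)$.
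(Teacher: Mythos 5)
Your proof is correct and follows essentially the same route as the paper's: both invert the block upper-triangular operator $\mathbf{T}-zI$ on $\calH\oplus\calH$ and then compute $\mathbf{K}^*(\mathbf{T}-zI)^{-1}\mathbf{K}$ directly from \eqref{e-50-bK} and \eqref{e-51-b-adj}. The only difference is organizational: you keep the scalars $A_j(z)=K_j^*(T_j-zI)^{-1}K_j$ abstract and conclude with the factorization identity $1-2i\bigl(A_1+A_2-2iA_1A_2\bigr)=(1-2iA_1)(1-2iA_2)$, whereas the paper substitutes the explicit values $\lambda_0,\mu_0$ from the outset and simplifies the resulting rational expression.
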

\begin{proof}
We are going to find the transfer function of the coupling $\Theta$ based on its definition \eqref{e6-3-3} and then show that it equals the right hand side of \eqref{e-91-mult}. Using \eqref{e-48-bT} and elementary calculations one finds
\begin{equation}\label{e-47-res}
    \begin{aligned}
(\mathbf{T}-zI)^{-1}&=\left(
                       \begin{array}{cc}
                         \lambda_0-z & 2i\sqrt{\IM\lambda_0\cdot\IM\mu_0} \\
                         0 & \mu_0-z \\
                       \end{array}
                     \right)^{-1}\\
                     &=\frac{1}{(\lambda_0-z)(\mu_0-z)}\left(
                       \begin{array}{cc}
                         \mu_0-z & -2i\sqrt{\IM\lambda_0\cdot\IM\mu_0} \\
                         0 & \lambda_0-z \\
                       \end{array}
                     \right).
    \end{aligned}
\end{equation}
Taking in to account that
$$
2i\sqrt{\IM\lambda_0\cdot\IM\mu_0}=\sqrt{(\lambda_0-\bar\lambda_0)(\mu_0-\bar\mu_0)},
$$
we use \eqref{e-50-bK}, \eqref{e-51-b-adj} with \eqref{e-47-res} and  proceed to find
$$
    \begin{aligned}
2i\mathbf{K}^*(\mathbf{T}-zI)^{-1}\mathbf{K}&=\frac{2i\left((\mu_0-z)\IM\lambda_0-2i\IM\lambda_0\IM\mu_0+(\lambda_0-z)\IM \mu_0 \right)}{(\lambda_0-z)(\mu_0-z)}\\
&=\frac{(\lambda_0-\bar\lambda_0)(\bar\mu_0-z)+(\lambda_0-z)(\mu_0-\bar\mu_0)}{(\lambda_0-z)(\mu_0-z)}.
 \end{aligned}
 $$
 Finally,
 $$
    \begin{aligned}
W_\Theta(z)&=1-2i\mathbf{K}^*(\mathbf{T}-zI)^{-1}\mathbf{K}
=1-\frac{(\lambda_0-\bar\lambda_0)(\bar\mu_0-z)+(\lambda_0-z)(\mu_0-\bar\mu_0)}{(\lambda_0-z)(\mu_0-z)}\\
&=\frac{(\lambda_0-z)(\mu_0-z)-(\lambda_0-\bar\lambda_0)(\bar\mu_0-z)-(\lambda_0-z)(\mu_0-\bar\mu_0)}{(\lambda_0-z)(\mu_0-z)}\\
&=\frac{(\bar\lambda_0-z)(\bar\mu_0-z)}{(\lambda_0-z)(\mu_0-z)}=\frac{\bar\lambda_0-z}{\lambda_0-z}\cdot\frac{\bar\mu_0-z}{\mu_0-z}
=W_{\Theta_1}(z)\cdot W_{\Theta_2}(z),
 \end{aligned}
 $$
 completing the proof.
\end{proof}

%{}{
The corresponding impedance function $V_{{\Theta}}(z)$ of the coupling $\Theta$ can be found using \eqref{e6-3-6} as follows.
$$
\begin{aligned}
V_{{\Theta}}(z) &= i \frac{W_{{\Theta}} (z) - 1}{W_{{\Theta}}(z) + 1}=
i \frac{\frac{\bar\lambda_0-z}{\lambda_0-z}\cdot\frac{\bar\mu_0-z}{\mu_0-z} - 1}{\frac{\bar\lambda_0-z}{\lambda_0-z}\cdot\frac{\bar\mu_0-z}{\mu_0-z} + 1}
=i \frac{(\bar\lambda_0-z)(\bar\mu_0-z)-(\lambda_0-z)(\mu_0-z)}{(\bar\lambda_0-z)(\bar\mu_0-z)+(\lambda_0-z)(\mu_0-z)}
\\
&=i \frac{\bar\lambda_0\bar\mu_0-\bar\lambda_0 z-\bar\mu_0 z+z^2-\lambda_0\mu_0+\lambda_0 z+\mu_0 z-z^2}{\bar\lambda_0\bar\mu_0-\bar\lambda_0 z-\bar\mu_0 z+z^2+\lambda_0\mu_0-\lambda_0 z-\mu_0 z+z^2}
\\
&=i\frac{(\bar\lambda_0\bar\mu_0-\lambda_0\mu_0)+(\lambda_0-\bar\lambda_0)z-(\mu_0-\bar\mu_0)z}
{(\bar\lambda_0\bar\mu_0+\lambda_0\mu_0)-(\lambda_0+\bar\lambda_0)z-(\mu_0+\bar\mu_0)z+2z^2}\\
&=i\frac{-2i\IM(\lambda_0\mu_0)+2i(\IM\lambda_0)z+2i(\IM\mu_0)z}{2\RE(\lambda_0\mu_0)-2(\RE\lambda_0)z-2(\RE\mu_0)z+2z^2}\\
&=\frac{\IM(\lambda_0\mu_0)-(\IM\lambda_0+\IM\mu_0)z}{\RE(\lambda_0\mu_0)-(\RE\lambda_0+\RE\mu_0)z+z^2}
=\frac{-\IM(\lambda_0\mu_0)+\IM(\lambda_0+\mu_0)z}{-\RE(\lambda_0\mu_0)+\RE(\lambda_0+\mu_0)z-z^2},
%\\&=\frac{\IM(\lambda_0+\mu_0)z-\IM(\lambda_0\mu_0)}{\RE(\lambda_0+\mu_0)z-\RE(\lambda_0\mu_0)-z^2},
\end{aligned}
$$
or
\begin{equation}\label{e-36-V}
    V_{{\Theta}}(z)=\frac{\IM(\lambda_0+\mu_0)z-\IM(\lambda_0\mu_0)}{\RE(\lambda_0+\mu_0)z-\RE(\lambda_0\mu_0)-z^2}.
    %\frac{\IM(\lambda_0+\mu_0)z-\IM(\lambda_0\mu_0)}{(\RE\lambda_0-z)(\RE\mu_0-z)}.
\end{equation}

Now {}{we will take } one step further and show that the impedance function of the coupling of two L-systems whose individual impedances belong to bounded Donoghue classes also belongs to such a class.
\begin{theorem}[cf. \cite{BMkT-2}, \cite{MT10}]\label{t-5-Don}
 Let an L-system $\Theta$ be the coupling of two L-systems $\Theta_1$
and $\Theta_2$ of the form \eqref{e-46-Theta12} with the main operators $T_1$ and $T_2$ given by \eqref{e-45-T12}. Let also $V_{\Theta_1}(z)\in\widehat{\sM}_{\kappa_1}$ and $V_{\Theta_2}(z)\in\widehat{\sM}_{\kappa_2}$. Then the impedance function of the coupling $V_{{\Theta}}(z)$ is given by \eqref{e-36-V} belongs to the class $\widehat{\sM}_{\kappa}$ with
\begin{equation}\label{e-38-kappa}
    \kappa=\kappa_1\cdot\kappa_2.
\end{equation}
\end{theorem}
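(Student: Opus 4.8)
The plan is to reduce the statement to a short computation with the scalar parameters $\lambda_0,\mu_0$ and then read off the class membership from the value at $z=i$, exactly as in the proof of Theorem~\ref{t-6}. First I would invoke Theorem~\ref{t-6}(2): since $V_{\Theta_1}(z)\in\widehat{\sM}_{\kappa_1}$, the parameter of the main operator $T_1$ must satisfy $\lambda_0=a_1 i$ with $0<a_1<1$, and by \eqref{e-4-38} we have $\kappa_1=(1-a_1)/(1+a_1)$; symmetrically $\mu_0=a_2 i$ with $0<a_2<1$ and $\kappa_2=(1-a_2)/(1+a_2)$. In particular $\RE\lambda_0=\RE\mu_0=0$, $\IM\lambda_0=a_1$, and $\IM\mu_0=a_2$.

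Next I would substitute these values into the explicit impedance formula \eqref{e-36-V} for the coupling. Since $\lambda_0+\mu_0=(a_1+a_2)i$ is purely imaginary and $\lambda_0\mu_0=-a_1a_2$ is real, we get $\RE(\lambda_0+\mu_0)=\IM(\lambda_0\mu_0)=0$, $\IM(\lambda_0+\mu_0)=a_1+a_2$, and $\RE(\lambda_0\mu_0)=-a_1a_2$, so that \eqref{e-36-V} collapses to
\begin{equation*}
V_\Theta(z)=\frac{(a_1+a_2)\,z}{a_1a_2-z^2}.
\end{equation*}
This makes the representing measure transparent: a partial-fraction decomposition places equal atoms at the symmetric points $\pm\sqrt{a_1a_2}$, so the measure is even about the origin and condition \eqref{e-9-cond} holds automatically. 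Equivalently, $V_\Theta(i)=i\,(a_1+a_2)/(1+a_1a_2)$ is purely imaginary, which, as noted in Section~\ref{s3}, is precisely condition \eqref{e-9-cond}. Hence $V_\Theta\in\widehat{\cN}$.

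Finally I would identify the class. Setting $a=(a_1+a_2)/(1+a_1a_2)$ so that $V_\Theta(i)=ia$, the decisive point is the inequality $a<1$: this is equivalent to $(1+a_1a_2)-(a_1+a_2)=(1-a_1)(1-a_2)>0$, which holds because $a_1,a_2<1$. Thus we land in $\widehat{\sM}_\kappa$ (rather than $\widehat{\sM}_\kappa^{-1}$), and by \eqref{e-45-kappa-1} we have $\kappa=(1-a)/(1+a)$. The remaining algebra is to factor numerator and denominator after clearing $1+a_1a_2$:
\begin{equation*}
\kappa=\frac{(1+a_1a_2)-(a_1+a_2)}{(1+a_1a_2)+(a_1+a_2)}=\frac{(1-a_1)(1-a_2)}{(1+a_1)(1+a_2)}=\kappa_1\kappa_2,
\end{equation*}
which is the desired identity \eqref{e-38-kappa}. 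The only step requiring genuine care is the sign check $a<1$ that keeps us in $\widehat{\sM}_\kappa$; everything else reduces to the two mirror-image factorizations that produce the product $\kappa_1\kappa_2$.
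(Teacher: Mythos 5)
Your argument is correct, but it is not the route the paper takes in its official proof of Theorem~\ref{t-5-Don}; it is, almost verbatim, the alternative argument the authors record in the Remark immediately following that theorem. The paper's proof works entirely at the level of transfer functions: from $V_{\Theta_j}(i)=ia_j$ it computes $W_{\Theta_j}(i)=\frac{1+a_j}{1-a_j}=\frac{1}{\kappa_j}$ via \eqref{e6-3-6}, and then invokes the Multiplication Theorem~\ref{unitar} at $z=i$ to get $W_\Theta(i)=W_{\Theta_1}(i)W_{\Theta_2}(i)=\frac{1}{\kappa_1\kappa_2}$, whence $\kappa=\kappa_1\kappa_2$ in one line. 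You instead work at the level of impedance functions: you pull $\lambda_0=a_1i$, $\mu_0=a_2i$ out of Theorem~\ref{t-6}, substitute into the explicit formula \eqref{e-36-V}, and identify the class from $V_\Theta(i)=i\,\frac{a_1+a_2}{1+a_1a_2}$ together with \eqref{e-45-kappa-1}. The paper's route is shorter and does not require the closed form of $V_\Theta$, so it transfers to any setting where the multiplication theorem is available; your route is more self-contained in one respect that the paper's terse proof glosses over, namely it exhibits the representing measure (symmetric atoms at $\pm\sqrt{a_1a_2}$, so \eqref{e-9-cond} holds) and carries out the sign check $a<1$ that certifies membership in $\widehat{\sM}_\kappa$ rather than $\widehat{\sM}_\kappa^{-1}$ before the value of $\kappa$ is read off. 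Both arguments are sound, and your final factorization $\kappa=\frac{(1-a_1)(1-a_2)}{(1+a_1)(1+a_2)}=\kappa_1\kappa_2$ matches the paper's.
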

\begin{proof}
{}{Since the membership  $V_{\Theta_j}(z)\in\widehat{\sM}_{\kappa_j}$, $ j=1,2$
ensures that
 $$
 V_{\Theta_j}(i)=ia_j\quad \text{fore some}\quad  1<a_j<1, \quad  j=1,2,
 $$
 with
 $$
 \frac{1+a_j}{1-a_j}=\frac{1}{\kappa_j},\quad j=1,2,
 $$
for the transfer functions
$W_{\Theta_j}(z)$ of the systems $ \Theta_j$, $j=1,2$ we have
$$
 W_{\Theta_j}(i)=\frac{1-iV_{\Theta_j}(i)}{1+iV_{\Theta_j}(i)}=\frac{1+a_j}{1-a_j}=\frac{1}{\kappa_j},\quad j=1,2.
 $$
 Now  \eqref{e-38-kappa} is an immediate consequence of the multiplication Theorem \ref{unitar}:
 $$
\kappa_1\cdot\kappa_2=\frac{1}{W_{\Theta_1}(i)}\cdot\frac{1}{W_{\Theta_2}(i)}=\frac{1}{W_{\Theta}(i)}=\kappa.
$$
}
 \end{proof}

%}
%\end{proof}
%\begin{proof}
\begin{remark}
{}{
Alternatively one can argue  as follows. We have shown in Theorem \ref{t-6}, {that the requirement that} $V_{\Theta_1}(z)\in\widehat{\sM}_{\kappa_1}$ and $V_{\Theta_2}(z)\in\widehat{\sM}_{\kappa_2}$ is equivalent to $\lambda_0=a_1 i$ and $\mu_0=a_2 i$ for some $0<a_1<1$ and $0<a_2<1$. It has been already shown above  that $V_{{\Theta}}(z)$ is given by \eqref{e-36-V} and hence
$$
V_{{\Theta}}(i)=\frac{\IM(a_1 i+a_2 i)i-\IM(-a_1a_2)}{\RE(a_1 i+a_2 i)i-\RE(-a_1a_2)+1}=
\frac{a_1 +a_2}{1+a_1 a_2}\,i=ai,
$$
where
$$a=\frac{a_1 +a_2}{1+a_1 a_2}.$$ Applying Theorem \ref{t-6} again we conclude that $V_{{\Theta}}(z)$ belongs to one of the bounded Donoghue classes. All that remains is to find the value of corresponding {parameter} $\kappa$.
Since $0<a_1<1$ and $0<a_2<1$ {implies} $\frac{a_1 +a_2}{1+a_1 a_2}<1$,  we apply \eqref{e-45-kappa-1} to get
$$
\kappa=\frac{1-a}{1+a}=\frac{1-\frac{a_1 +a_2}{1+a_1 a_2}}{1+\frac{a_1 +a_2}{1+a_1 a_2}}=\frac{1+a_1a_2-a_1-a_2}{1+a_1a_2+a_1+a_2}=\frac{(1-a_1)(1-a_2)}{(1+a_1)(1+a_2)}=\kappa_1\cdot\kappa_2.
$$
}
\end{remark}

Analogous  results take place for $V_{\Theta_1}(z)\in\widehat{\sM_{\kappa_1}^{-1}}$ and $V_{\Theta_2}(z)\in\widehat{\sM_{\kappa_2}^{-1}}$ as well as if  both $V_{\Theta_1}(z),\, V_{\Theta_2}(z)\in\widehat{\sM}$.

 The obtained result is useful to compare with the multiplicativity of the von Neumann parameters obtained in \cite[Theorem 5.4]{MT10} when discussing the coupling of unbounded dissipative  operators.

\section{c-Entropy and dissipation coefficient of an L-system}\label{s6}

In this section we are going to evaluate the c-entropy and dissipation coefficient of an
{}{L-system with   multiplication operator.}

We begin with reminding {}{the definition }
 of the c-entropy of an L-system introduced in \cite{BT-16}.
\begin{definition}\label{d-9-ent}
Let $\Theta$ be an L-system of the form \eqref{e-4-34}. The quantity
\begin{equation}\label{e-80-entropy-def}
    \calS=-\ln (|W_\Theta(-i)|),%=-\ln(|\kappa|)=\ln (1/|\kappa|)
\end{equation}
where $W_\Theta(z)$ is the transfer function of $\Theta$, is called the \textbf{coupling entropy} (or \textbf{c-entropy}) of the L-system $\Theta$.
\end{definition}
%As we mentioned it in \cite{BT-16}, there is an alternative operator-theoretic way to define the c-Entropy. If $T$ is the main operator of the L-system  $\Theta$ and $\kappa$ is  von Neumann's parameter of $T$ in some basis $g_\pm$, then (see  \cite{BMkT-2}) $|W_\Theta(-i)|=|\kappa|$ and hence
%\begin{equation}\label{e-70-entropy}
%    \calS=-\ln (|W_\Theta(-i)|)=-\ln(|\kappa|).%=\ln (1/|\kappa|)
%\end{equation}
%We emphasize that c-Entropy defined by formula \eqref{e-70-entropy} does not depend on the choice of deficiency basis $g_\pm$ and moreover is an additive function with respect to the coupling of L-systems (see \cite{BMkT-2}).
 Note that if, in addition,  the point $z=i$ belongs to $\rho(T)$, then we also have that
\begin{equation}\label{e-80-entropy}
     \calS=\ln (|W_\Theta(i)|).%=\ln (1/|\kappa|)=-\ln(|\kappa|).
\end{equation}
%This follows from the known (see \cite{ABT}) property of the transfer functions for L-systems  that states that $W_\Theta(z)\overline{W_\Theta(\bar z)}=1$ and  the fact that $|W_\Theta(i)|=1/|\kappa|$ (see \cite{BMkT}).

{}{The c-entropy of an L-system of the form \eqref{e-4-34} with  multiplication operator \eqref{e-d-4-30} can explicitly be evaluates as follows.}
\begin{theorem}\label{t-8}%
Let $\Theta$ be an L-system   of the form \eqref{e-4-34} that is based upon multiplication operator \eqref{e-d-4-30}. Then  the c-entropy $\calS$ of $\Theta$ is
%\begin{equation}\label{e-40-entropy}
%     \calS=\left(\ln((\RE\lambda_0)^2+(1+\IM\lambda_0)^2)-\ln((\RE\lambda_0)^2+(1-\IM\lambda_0)^2) \right).
%\end{equation}
%{}{
\begin{equation}\label{e-40-entropy}
     \calS=\frac{1}{2}\ln{\frac{(\RE\lambda_0)^2+(1+\IM\lambda_0)^2}{(\RE\lambda_0)^2+(1-\IM\lambda_0)^2)}}.
\end{equation}
%}
\end{theorem}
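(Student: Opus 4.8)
The plan is to evaluate the c-entropy directly from its definition \eqref{e-80-entropy-def}, $\calS=-\ln(|W_\Theta(-i)|)$, using the explicit closed form \eqref{e-4-35} of the transfer function already computed for this very L-system, namely $W_\Theta(z)=\frac{\bar\lambda_0-z}{\lambda_0-z}$. Because the whole argument collapses to a single modulus computation, there is no serious obstacle here; the only point requiring a moment's care is the choice of evaluation point, which I address first.

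First I would verify that $-i\in\rho(T)$, so that \eqref{e-80-entropy-def} applies verbatim. The spectrum of the main operator $T$ from \eqref{e-d-4-30} is the single point $\{\lambda_0\}$, and since $\IM\lambda_0>0$ we have $\IM(\lambda_0+i)=\IM\lambda_0+1>0$, so $\lambda_0\neq -i$ and hence $-i\in\rho(T)$ for every admissible $\lambda_0$. By contrast, the alternative form \eqref{e-80-entropy} requires $i\in\rho(T)$, i.e. $\lambda_0\neq i$, which fails precisely in case (1) of Theorem \ref{t-6}; working with $-i$ sidesteps this minor case distinction.

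Next I would substitute $z=-i$ into \eqref{e-4-35} to obtain $W_\Theta(-i)=\frac{\bar\lambda_0+i}{\lambda_0+i}$, write $\lambda_0=\RE\lambda_0+i\IM\lambda_0$, and read off the two moduli
$$
\abs{\bar\lambda_0+i}^2=(\RE\lambda_0)^2+(1-\IM\lambda_0)^2,\qquad \abs{\lambda_0+i}^2=(\RE\lambda_0)^2+(1+\IM\lambda_0)^2.
$$
Dividing gives $\abs{W_\Theta(-i)}^2=\dfrac{(\RE\lambda_0)^2+(1-\IM\lambda_0)^2}{(\RE\lambda_0)^2+(1+\IM\lambda_0)^2}$.

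Finally, I would take $-\tfrac12\ln$ of this quantity; since applying the negative logarithm simply inverts the fraction inside, the result is exactly \eqref{e-40-entropy}. As a consistency check one can instead evaluate $\ln\abs{W_\Theta(i)}$ whenever $\lambda_0\neq i$ and confirm that the same formula emerges, which simultaneously corroborates the computation and the equivalence of \eqref{e-80-entropy-def} with \eqref{e-80-entropy}.
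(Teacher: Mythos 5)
Your proposal is correct and follows essentially the same route as the paper: substitute $z=-i$ into the explicit transfer function \eqref{e-4-35}, compute the modulus $\abs{W_\Theta(-i)}=\sqrt{\tfrac{(\RE\lambda_0)^2+(1-\IM\lambda_0)^2}{(\RE\lambda_0)^2+(1+\IM\lambda_0)^2}}$, and apply $-\ln$ to obtain \eqref{e-40-entropy}. The only addition is your explicit check that $-i\in\rho(T)$, which the paper leaves implicit but which is a harmless (and sensible) refinement.
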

\begin{proof}
Our plan is to find $\calS$ by the definition using \eqref{e-80-entropy-def}. Utilizing \eqref{e-4-35} we obtain
$$
|W_\Theta(-i)|=\left|\frac{\bar\lambda_0+i}{\lambda_0+i}\right|
=\left|\frac{\RE\lambda_0+i(1-\IM\lambda_0)}{\RE\lambda_0+i(1+\IM\lambda_0)}\right|
=\sqrt{\frac{(\RE\lambda_0)^2+(1-\IM\lambda_0)^2}{(\RE\lambda_0)^2+(1+\IM\lambda_0)^2}}.
$$
{}{Therefore,}
\begin{equation}\label{e-41}
    \begin{aligned}
\calS&=-\ln (|W_\Theta(-i)|)=-\ln\sqrt{\frac{(\RE\lambda_0)^2+(1-\IM\lambda_0)^2}{(\RE\lambda_0)^2+(1+\IM\lambda_0)^2}}
\\
&=\frac{1}{2}\ln\left( \frac{(\RE\lambda_0)^2+(1+\IM\lambda_0)^2}{(\RE\lambda_0)^2+(1-\IM\lambda_0)^2}\right).
%\\&=\frac{1}{2}\left(\ln((\RE\lambda_0)^2+(1+\IM\lambda_0)^2)-\ln((\RE\lambda_0)^2+(1-\IM\lambda_0)^2) \right).
    \end{aligned}
\end{equation}
The proof is complete.
\end{proof}

%\newpage
\begin{remark}\label{r-11}
{}{As} it follows from Theorems \ref{t-6}, \ref{t-8}, and formula \eqref{e-40-entropy}, an L-system of the form \eqref{e-4-34} achieves {}{its}  infinite c-entropy if and only $\lambda_0=i$. {}{Recall  that a finite L-system's c-entropy {}{attains} its maximum whenever its impedance function $V_\Theta(z)$ belongs to one of the generalized Donoghue classes \cite{BT-22}. }That is,
$V_\Theta(i)$ {}{is purely  imaginary.}
{}{In the current setting,
according to \eqref{e-4-37},   $\RE(V_\Theta(i))$ vanishes if and only if $\RE\lambda_0=0$.} Therefore,
c-entropy
of an L-system of the form \eqref{e-4-34} attains its  maximum (finite) value whenever  $\lambda_0=ai$, for $a\in(0,1)\cup(1,+\infty)$.
\end{remark}

 The graph of c-entropy {}{$\calS(x,y)$ }
as a function of $x=\RE\lambda_0$ and $y=\RE\lambda_0$ is shown on Figure \ref{fig-1}. The visible pick is actually
{}{represents}
the infinite value of {}{$\calS(x,y)$ } that occurs at the point $(0,1)$ when $\lambda_0=i$.

%%%%%%%%%%%
\begin{figure}
  % Requires \usepackage{graphicx}
  \begin{center}
 \includegraphics[width=90mm]{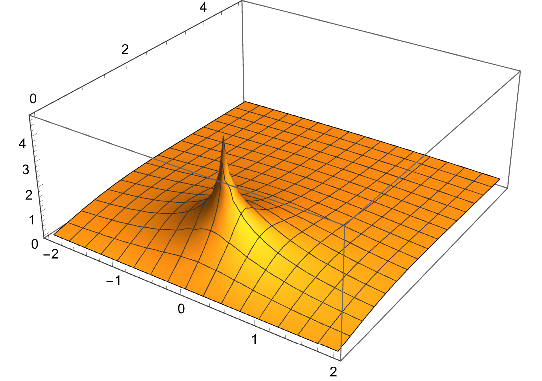}
  \caption{Maximal c-Entropy }\label{fig-1}
  \end{center}
\end{figure}
%%%%%%%%%%%

Let us recall the definition of the dissipation coefficient of an L-system.
\begin{definition}[{cf. \cite{BT-16}}, \cite{BT-21}]\label{d-10}
Let $\Theta$ be an L-system  of the form \eqref{e-4-34} with c-entropy $\calS$. Then
the quantity
 \begin{equation}\label{e-69-ent-dis}
\calD=1-e^{-2\calS}
\end{equation}
 is called the \textbf{coefficient of dissipation} (or dissipation coefficient) of the L-system $\Theta$.
\end{definition}
%It was shown in \cite{BT-21} that the  c-entropy $\calS$ and the coefficient of dissipation $\calD$ of an L-system are related as

\begin{theorem}\label{t-9}%
Let $\Theta$ be an L-system   of the form \eqref{e-4-34}  {}{with}  multiplication operator \eqref{e-d-4-30}. Then  the dissipation coefficient $\calD$ of $\Theta$ is
\begin{equation}\label{e-44-dc}
     \calD=\frac{4\IM\lambda_0}{(\RE\lambda_0)^2+(1+\IM\lambda_0)^2}.
\end{equation}
\end{theorem}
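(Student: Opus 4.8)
The plan is to compute $\calD$ directly from its definition \eqref{e-69-ent-dis} by feeding in the explicit c-entropy formula already established in Theorem \ref{t-8}. The entire argument is a short algebraic simplification, so I do not anticipate any genuine obstacle; the only point requiring a moment's care is recognizing that the numerator collapses via a difference-of-squares identity.

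First I would exponentiate the c-entropy expression \eqref{e-40-entropy}. Since the logarithm in \eqref{e-40-entropy} carries the factor $\tfrac12$, applying $e^{2\,\cdot}$ cleanly cancels it, yielding
\begin{equation*}
e^{2\calS}=\frac{(\RE\lambda_0)^2+(1+\IM\lambda_0)^2}{(\RE\lambda_0)^2+(1-\IM\lambda_0)^2},
\qquad\text{hence}\qquad
e^{-2\calS}=\frac{(\RE\lambda_0)^2+(1-\IM\lambda_0)^2}{(\RE\lambda_0)^2+(1+\IM\lambda_0)^2}.
\end{equation*}
Next I would substitute this into $\calD=1-e^{-2\calS}$ and place the two terms over the common denominator $(\RE\lambda_0)^2+(1+\IM\lambda_0)^2$.

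The final step is to simplify the resulting numerator
\begin{equation*}
\bigl[(\RE\lambda_0)^2+(1+\IM\lambda_0)^2\bigr]-\bigl[(\RE\lambda_0)^2+(1-\IM\lambda_0)^2\bigr]
=(1+\IM\lambda_0)^2-(1-\IM\lambda_0)^2,
\end{equation*}
where the $(\RE\lambda_0)^2$ contributions cancel. Factoring the difference of squares gives $\bigl[(1+\IM\lambda_0)+(1-\IM\lambda_0)\bigr]\bigl[(1+\IM\lambda_0)-(1-\IM\lambda_0)\bigr]=2\cdot 2\IM\lambda_0=4\IM\lambda_0$, which produces exactly \eqref{e-44-dc} and completes the proof. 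The only conceivable pitfall is an arithmetic slip in the sign bookkeeping when subtracting the two bracketed quantities, but the difference-of-squares structure makes this transparent.
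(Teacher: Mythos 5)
Your proposal is correct and follows essentially the same route as the paper: both substitute the c-entropy of Theorem \ref{t-8} into $\calD=1-e^{-2\calS}$ (the paper works from the middle expression $e^{-2\calS}=|W_\Theta(-i)|^2$ in \eqref{e-41}, which is the same quantity you obtain by exponentiating \eqref{e-40-entropy}) and then reduce the numerator $(1+\IM\lambda_0)^2-(1-\IM\lambda_0)^2$ to $4\IM\lambda_0$. No gaps.
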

\begin{proof}
In order to simplify calculations let us denote $\lambda_0=a+bi$. Using \eqref{e-69-ent-dis} with the middle part of \eqref{e-41} we get
$$
\begin{aligned}
\calD&=1-e^{-2\cS}=1-\left|\frac{\bar\lambda_0+i}{\lambda_0+i}\right|^2=1-\frac{(\RE\lambda_0)^2+(1-\IM\lambda_0)^2}{(\RE\lambda_0)^2+(1+\IM\lambda_0)^2}\\
&=1-\frac{a^2+(1-b)^2}{a^2+(1+b)^2}=\frac{a^2+(1+b)^2-a^2-(1-b)^2}{a^2+(1+b)^2}=\frac{4b}{a^2+(1+b)^2}.
\end{aligned}
$$
Thus, \eqref{e-44-dc} takes place.
\end{proof}

Now, we turn our attention to the c-entropy of L-system coupling, as described in Section \ref{s5}. The following theorem establishes the additivity property of c-Entropy with respect to the coupling of two L-systems, thereby justifying the use of the term “coupling entropy.”

\begin{theorem}\label{t-13}
Let an L-system $\Theta$ be the coupling of two L-systems $\Theta_1$ and $\Theta_2$ of the form \eqref{e-46-Theta12} with the corresponding c-entropies $\calS_1$ and $\calS_2$.
%main operators $T_1$ and $T_2$ given by \eqref{e-45-T12}.
Then the c-entropy $\calS$ of $\Theta$ is such that
\begin{equation}\label{e-56-ent}
\calS=\calS_1+\calS_2.
\end{equation}

If either $\calS_1=\infty$ or $\calS_2=\infty$, then $\calS=\infty$.
\end{theorem}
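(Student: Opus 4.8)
The plan is to deduce the additivity of c-entropy directly from the multiplicativity of transfer functions established in Theorem \ref{unitar}. First I would recall the definition \eqref{e-80-entropy-def}, namely $\calS=-\ln(|W_\Theta(-i)|)$, and combine it with the Multiplication Theorem, which asserts $W_\Theta(z)=W_{\Theta_1}(z)\cdot W_{\Theta_2}(z)$ for every $z\in\rho(T_1)\cap\rho(T_2)=\bbC\setminus\{\lambda_0,\mu_0\}$. Since $\IM\lambda_0>0$ and $\IM\mu_0>0$, we have $-i\notin\{\lambda_0,\mu_0\}$, so $-i$ lies in the resolvent set of the coupling operator $\mathbf{T}$; hence $W_\Theta(-i)$, $W_{\Theta_1}(-i)$, and $W_{\Theta_2}(-i)$ are all well defined and
\[
W_\Theta(-i)=W_{\Theta_1}(-i)\cdot W_{\Theta_2}(-i).
\]

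Next I would pass to moduli, using multiplicativity of the absolute value, to obtain $|W_\Theta(-i)|=|W_{\Theta_1}(-i)|\cdot|W_{\Theta_2}(-i)|$. Applying $-\ln$ and the additivity of the logarithm over products then gives
\[
\calS=-\ln|W_\Theta(-i)|=-\ln|W_{\Theta_1}(-i)|-\ln|W_{\Theta_2}(-i)|=\calS_1+\calS_2,
\]
which is precisely \eqref{e-56-ent}. This disposes of the finite case.

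For the degenerate case I would argue separately. From \eqref{e-4-35} the factor $W_{\Theta_1}(-i)=\frac{\bar\lambda_0+i}{\lambda_0+i}$ vanishes exactly when $\lambda_0=i$ (and likewise $W_{\Theta_2}(-i)=0$ iff $\mu_0=i$), in which case $\calS_j=-\ln 0=+\infty$. But then the product $|W_\Theta(-i)|=|W_{\Theta_1}(-i)|\cdot|W_{\Theta_2}(-i)|$ also vanishes, forcing $\calS=+\infty$; so $\calS_1=\infty$ or $\calS_2=\infty$ implies $\calS=\infty$. I do not anticipate any genuine obstacle here, as the whole statement is a formal consequence of Theorem \ref{unitar} together with the identity $\ln|ab|=\ln|a|+\ln|b|$. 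The only points that merit a moment's care are confirming that $-i$ is a legitimate evaluation point (i.e.\ that it belongs to $\rho(\mathbf{T})$, which follows from $\IM\lambda_0,\IM\mu_0>0$) and treating the vanishing factor $|W_{\Theta_j}(-i)|=0$ as the limiting value $\calS_j=+\infty$ rather than through the finite product-of-logarithms formula.
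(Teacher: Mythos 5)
Your proposal is correct and follows essentially the same route as the paper: both evaluate the Multiplication Theorem (Theorem \ref{unitar}) at $z=-i$, take moduli, and apply $-\ln$ together with the additivity of the logarithm to obtain $\calS=\calS_1+\calS_2$. Your explicit verification that $-i\in\rho(\mathbf{T})$ and your separate treatment of the vanishing-factor case $\calS_j=\infty$ are welcome details that the paper leaves implicit.
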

\begin{proof}
We rely on the Definition \ref{d-9-ent} of c-entropy and Theorem \ref{unitar}. Applying \eqref{e-80-entropy-def} with \eqref{e-91-mult} yields
$$
\begin{aligned}
\calS&=-\ln (|W_\Theta(-i)|)=-\ln (|W_{\Theta_1}(-i)\cdot W_{\Theta_2}(-i)|)\\
&=-\ln (|W_{\Theta_1}(-i)|-\ln (|W_{\Theta_2}(-i)|=\calS_1+\calS_2.
\end{aligned}
$$
\end{proof}
It follows directly from Theorem \ref{t-9} and formulas \eqref{e-40-entropy}-\eqref{e-41} that the c-entropy of the coupling of two L-systems $\Theta_1$ and $\Theta_2$ of the form \eqref{e-46-Theta12} can be written in terms of the defining numbers $\lambda_0$ and $\mu_0$ as follows.
\begin{equation}\label{e-57}
    \begin{aligned}
\calS&=\frac{1}{2}\ln\left( \frac{(\RE\lambda_0)^2+(1+\IM\lambda_0)^2}{(\RE\lambda_0)^2+(1-\IM\lambda_0)^2}\right)+
\frac{1}{2}\ln\left( \frac{(\RE\mu_0)^2+(1+\IM\mu_0)^2}{(\RE\mu_0)^2+(1-\IM\mu_0)^2}\right)
\\
&=\frac{1}{2}\ln\left( \frac{(\RE\lambda_0)^2+(1+\IM\lambda_0)^2}{(\RE\lambda_0)^2+(1-\IM\lambda_0)^2}
\cdot\frac{(\RE\mu_0)^2+(1+\IM\mu_0)^2}{(\RE\mu_0)^2+(1-\IM\mu_0)^2}\right).
    \end{aligned}
\end{equation}

Now we are going to look into the dissipation coefficient of the coupling of two L-systems of the form \eqref{e-46-Theta12}. In \cite{BT-21} we made a note that  if L-system $\Theta$ with c-entropy $\calS$ is a coupling of two L-systems $\Theta_1$ and $\Theta_2$ with c-entropies $\calS_1$ and $\calS_2$, respectively, formula \eqref{e-56-ent} holds, i.e., $\calS=\calS_1+\calS_2.$
Let $\calD$, $\calD_1$, and $\calD_2$ be the dissipation coefficients of L-systems $\Theta$, $\Theta_1$, and $\Theta_2$. Then \eqref{e-69-ent-dis} implies
$$
1-\calD=e^{-2\cS}=e^{-2(\cS_1+\cS_2)}=e^{-2\cS_1}\cdot e^{-2\cS_2}=(1-\calD_1)(1-\calD_2).
$$
Thus, the formula
\begin{equation}\label{e-72-coupling}
    \calD=1-(1-\calD_1)(1-\calD_2)=\calD_1+\calD_2-\calD_1\calD_2
\end{equation}
describes the coefficient of dissipation of the L-system coupling. The following theorem gives the value of the dissipation coefficient of two L-systems of the form \eqref{e-46-Theta12} in terms of their defining numbers $\lambda_0$ and $\mu_0$.
\begin{theorem}\label{t-14}
Let an L-system $\Theta$ be the coupling of two L-systems $\Theta_1$ and $\Theta_2$ of the form \eqref{e-46-Theta12} with the corresponding coefficients of dissipation  $\calD_1$ and $\calD_2$.
Then the dissipation coefficient $\calD$ of $\Theta$ is such that
\begin{equation}\label{e-59-D}
\calD=\frac{4\IM\lambda_0(|\mu_0|^2+1)+4\IM\mu_0(|\lambda_0|^2+1)}{[(\RE\lambda_0)^2+(1+\IM\lambda_0)^2][(\RE\mu_0)^2+(1+\IM\mu_0)^2]}.
\end{equation}
\end{theorem}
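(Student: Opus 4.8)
The plan is to reduce everything to the single-system dissipation formula of Theorem \ref{t-9} by means of the coupling identity \eqref{e-72-coupling}, and then to carry out a direct algebraic simplification. Concretely, I would start from $\calD=\calD_1+\calD_2-\calD_1\calD_2$ and substitute $\calD_1=4\IM\lambda_0/D_1$ and $\calD_2=4\IM\mu_0/D_2$, where $D_1=(\RE\lambda_0)^2+(1+\IM\lambda_0)^2$ and $D_2=(\RE\mu_0)^2+(1+\IM\mu_0)^2$ are the denominators dictated by \eqref{e-44-dc}. Placing the three terms over the common denominator $D_1D_2$---which is already the denominator appearing in \eqref{e-59-D}---reduces the problem to matching numerators.

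The key step is the observation that each denominator expands as $D_1=|\lambda_0|^2+1+2\IM\lambda_0$ and $D_2=|\mu_0|^2+1+2\IM\mu_0$, since $(\RE\lambda_0)^2+(\IM\lambda_0)^2=|\lambda_0|^2$. I would then expand the numerator $4\IM\lambda_0\,D_2+4\IM\mu_0\,D_1-16\,\IM\lambda_0\,\IM\mu_0$ using these forms: the product $4\IM\lambda_0\,D_2$ contributes a term $8\,\IM\lambda_0\,\IM\mu_0$ from the summand $2\IM\mu_0$ inside $D_2$, and symmetrically $4\IM\mu_0\,D_1$ contributes another $8\,\IM\lambda_0\,\IM\mu_0$, so together the linear parts generate a cross term $16\,\IM\lambda_0\,\IM\mu_0$. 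This is exactly cancelled by the $-16\,\IM\lambda_0\,\IM\mu_0$ coming from the $-\calD_1\calD_2$ contribution, and what survives is $4\IM\lambda_0(|\mu_0|^2+1)+4\IM\mu_0(|\lambda_0|^2+1)$, which is precisely the numerator of \eqref{e-59-D}.

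The main (and really only) obstacle is bookkeeping: one must track the cross term carefully to confirm that the $+16\,\IM\lambda_0\,\IM\mu_0$ produced by the linear summands $2\IM\mu_0$ and $2\IM\lambda_0$ inside $D_2$ and $D_1$ cancels precisely against the product term $\calD_1\calD_2$. No further analytic input is needed beyond the already-established formulas \eqref{e-72-coupling} and \eqref{e-44-dc}; the entire argument is a verification that two rational expressions in $\lambda_0$ and $\mu_0$ coincide.
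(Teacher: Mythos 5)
Your proposal is correct and follows essentially the same route as the paper: substitute \eqref{e-44-dc} into \eqref{e-72-coupling}, place everything over the common denominator, and use $(\RE\lambda_0)^2+(1+\IM\lambda_0)^2=|\lambda_0|^2+1+2\IM\lambda_0$ (and likewise for $\mu_0$) to collapse the numerator. Your bookkeeping of the cross-term cancellation is in fact more explicit than the paper's displayed computation, whose intermediate line carries $+16\,\IM\lambda_0\cdot\IM\mu_0$ where \eqref{e-72-coupling} requires a minus sign --- an evident typo, since the final expression \eqref{e-59-D} matches the correct sign, exactly as your calculation shows.
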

\begin{proof}
Applying formula \eqref{e-72-coupling} with \eqref{e-44-dc} gives
$$
     \begin{aligned}
\calD&=\frac{4\IM\lambda_0}{(\RE\lambda_0)^2+(1+\IM\lambda_0)^2}+\frac{4\IM\mu_0}{(\RE\mu_0)^2+(1+\IM\mu_0)^2}\\
&\quad+\frac{16\IM\lambda_0\cdot\IM\mu_0}{[(\RE\lambda_0)^2+(1+\IM\lambda_0)^2][(\RE\mu_0)^2+(1+\IM\mu_0)^2]}\\
&=\frac{4\IM\lambda_0(|\mu_0|^2+1)+4\IM\mu_0(|\lambda_0|^2+1)}{[(\RE\lambda_0)^2+(1+\IM\lambda_0)^2][(\RE\mu_0)^2+(1+\IM\mu_0)^2]}.
     \end{aligned}
$$
Thus \eqref{e-59-D} takes place.
\end{proof}

\section{Skew-adjoint L-system with multiplication operator}\label{s7}

Let (as above) $\calH$ be  a one-dimensional Hilbert space with an inner product $(\cdot,\cdot)$ and a normalized basis vector $h_0\in \calH$, ($\|h_0\|=1$), and  $T$  be a multiplication operator in $\calH$ of the form \eqref{e-d-4-30}. Consider the skew-adjoint to $T$ operator $T^\times$ defined by the formula
\begin{equation}\label{e-d-60}
    T^\times h=-\bar\lambda_0 h,\quad h\in\calH.
\end{equation}
Clearly, for all $h\in\calH$ we have $(T h,h)=-(h,T^\times h)$ and
$$
(T^\times)^* h=-T h=-\lambda_0 h,\quad\IM T^\times h=\IM\lambda_0 h,\quad\textrm{ and }\quad \RE T^\times  h=-\RE \lambda_0 h.
$$
We would like to include $T^\times$ into a  L-system $\Theta^\times$ of the form \eqref{e-4-34}. Let $K:\dC\rightarrow\calH$ be  of the form \eqref{e-4-31} with $K^*:\calH\rightarrow\dC$ given by \eqref{e-4-32} exactly as described in Section \ref{s4}. Then,
\begin{equation}\label{e-61}
    \IM T^\times h=\IM\lambda_0 h=\IM T h=K\,K^* h,\; \forall h\in\calH,
\end{equation}
and hence $T^\times$ can be included into the L-system
\begin{equation}\label{e-7-61}
    \Theta^\times= \begin{pmatrix} T^\times&K&\ 1\cr  \calH & &\dC\cr \end{pmatrix},
\end{equation}
where operators $T$ and $K$ are defined by \eqref{e-d-60} and \eqref{e-4-31}, respectively. The L-system $\Theta^\times$ of the form \eqref{e-7-61} will be called \textbf{skew-adjoint L-system} with respect to an L-system $\Theta$ of the form \eqref{e-4-34}.

Repeating the steps described in Section \ref{s4}, one relies on \eqref{e-4-35} to obtain
\begin{equation}\label{e-7-63}
    W_{\Theta^\times} (z)=I-2iK^\ast (T^\times-zI)^{-1}K=\frac{\lambda_0+z}{\bar\lambda_0+z}.
\end{equation}
The corresponding impedance function  is easily found and given by
\begin{equation}\label{e-7-64}
    V_{\Theta^\times} (z)=K^\ast (\RE T^\times - zI)^{-1} K=-\frac{\IM\lambda_0}{\RE\lambda_0+z}.
\end{equation}
\begin{theorem}\label{t-15}
Let an L-system $\Theta$ be  of the form \eqref{e-4-34} with the corresponding skew-adjoint L-system $\Theta^\times$ of the form \eqref{e-7-61}.
Then the c-entropies $\calS(\Theta)$ and $\calS(\Theta^\times)$ and dissipation coefficients $\calD(\Theta)$ and $\calD(\Theta^\times)$ of $\Theta$ and $\Theta^\times$ coincide, respectively.
\end{theorem}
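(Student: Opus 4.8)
The plan is to reduce everything to a single modulus computation: by Definition~\ref{d-9-ent} the c-entropy is $\calS=-\ln|W_\Theta(-i)|$, so once I show that the two transfer functions agree in absolute value at $z=-i$, both the equality of c-entropies and (via Definition~\ref{d-10}) the equality of dissipation coefficients will follow. Thus the whole statement rests on verifying $|W_{\Theta^\times}(-i)|=|W_\Theta(-i)|$.

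First I would evaluate the two transfer functions at $z=-i$. From \eqref{e-4-35} we have $W_\Theta(-i)=\frac{\bar\lambda_0+i}{\lambda_0+i}$, while \eqref{e-7-63} gives $W_{\Theta^\times}(-i)=\frac{\lambda_0-i}{\bar\lambda_0-i}$. The key observation is that these two complex numbers are complex conjugates of one another: conjugating numerator and denominator of $W_\Theta(-i)$ sends $\bar\lambda_0+i\mapsto\lambda_0-i$ and $\lambda_0+i\mapsto\bar\lambda_0-i$, so $\overline{W_\Theta(-i)}=\frac{\lambda_0-i}{\bar\lambda_0-i}=W_{\Theta^\times}(-i)$. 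Since complex conjugation preserves absolute value, $|W_{\Theta^\times}(-i)|=|W_\Theta(-i)|$, whence $\calS(\Theta^\times)=\calS(\Theta)$. If one prefers an explicit check, writing $\lambda_0=a+bi$ yields $|W_\Theta(-i)|^2=\frac{a^2+(1-b)^2}{a^2+(1+b)^2}$ and $|W_{\Theta^\times}(-i)|^2=\frac{a^2+(b-1)^2}{a^2+(b+1)^2}$, which coincide because $(1-b)^2=(b-1)^2$; this recovers formula \eqref{e-40-entropy} verbatim for both systems.

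For the dissipation coefficients I would then invoke Definition~\ref{d-10}: the quantity $\calD=1-e^{-2\calS}$ is a function of the c-entropy alone. Having already established $\calS(\Theta^\times)=\calS(\Theta)$, I conclude at once that $\calD(\Theta^\times)=\calD(\Theta)$, completing the proof.

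There is no real obstacle here: the argument is a short computation, and the only point deserving attention is the bookkeeping of the conjugation $W_{\Theta^\times}(-i)=\overline{W_\Theta(-i)}$, which is precisely what makes the modulus invariance transparent and explains conceptually why passing from $T$ to its skew-adjoint $T^\times=-\bar\lambda_0\,\cdot$ leaves both invariants unchanged.
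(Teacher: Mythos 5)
Your proposal is correct and follows essentially the same route as the paper: both arguments reduce the claim to checking $|W_{\Theta^\times}(-i)|=|W_\Theta(-i)|$ using \eqref{e-4-35} and \eqref{e-7-63}, and then obtain the equality of dissipation coefficients directly from $\calD=1-e^{-2\calS}$. Your observation that $W_{\Theta^\times}(-i)=\overline{W_\Theta(-i)}$ is a slightly slicker way to see the modulus equality than the paper's explicit real/imaginary-part computation, but it is the same proof in substance.
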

\begin{proof}
We are going to find the c-entropy  $\calS(\Theta^\times)$ by the definition using \eqref{e-80-entropy-def} and \eqref{e-4-35}. We obtain
$$
|W_{\Theta^\times}(-i)|=\left|\frac{\lambda_0-i}{\bar\lambda_0-i}\right|
=\left|\frac{\RE\lambda_0-i(1-\IM\lambda_0)}{\RE\lambda_0-i(1+\IM\lambda_0)}\right|
=\sqrt{\frac{(\RE\lambda_0)^2+(1-\IM\lambda_0)^2}{(\RE\lambda_0)^2+(1+\IM\lambda_0)^2}}.
$$
But we have shown in the proof of Theorem \ref{t-8}
$$
|W_\Theta(-i)|=\left|\frac{\bar\lambda_0+i}{\lambda_0+i}\right|
=\left|\frac{\RE\lambda_0+i(1-\IM\lambda_0)}{\RE\lambda_0+i(1+\IM\lambda_0)}\right|
=\sqrt{\frac{(\RE\lambda_0)^2+(1-\IM\lambda_0)^2}{(\RE\lambda_0)^2+(1+\IM\lambda_0)^2}}.
$$
Thus, $|W_\Theta(-i)|=|W_{\Theta^\times}(-i)|$ and, applying the definition of c-entropy \eqref{e-80-entropy-def}, one obtains that the c-entropies of both L-systems $\Theta$ and $\Theta^\times$ match, that is $\calS(\Theta)=\calS(\Theta^\times)$.

The equality of the dissipation coefficients $\calD(\Theta)$ and $\calD(\Theta^\times)$ follows immediately from \eqref{e-69-ent-dis}.
\end{proof}

Let us now consider the coupling of the L-systems $\Theta$ and $\Theta^\times$. Hence, we are applying  Definition \ref{d-11} to the L-systems of the form \eqref{e-4-34}
\begin{equation*}\label{e-65-Theta12}
    \Theta= \begin{pmatrix} T&K&\ 1\cr  \calH & &\dC\cr \end{pmatrix}
\quad \textrm{and}\quad
\Theta^\times= \begin{pmatrix} T^\times&K&\ 1\cr  \calH & &\dC\cr \end{pmatrix},
\end{equation*}
where
\begin{equation}\label{e-66-T12}
    T h=\lambda_0 h\quad \textrm{and}\quad T^\times h=-\bar\lambda_0 h,\quad h\in\calH,
\end{equation}
and
where $K:\dC\rightarrow\calH$ and $K^*:\calH\rightarrow\dC$
are such that
\begin{equation*}\label{e-67-K12}
   K c=(\sqrt{\IM\lambda_0}\, c) h_0
   \quad \textrm{and}\quad
    K^* h=(h, \sqrt{\IM\lambda_0}\,h_0),
   \quad \forall c\in\dC.
\end{equation*}
Here $h_0\in \calH$ is normalized basis vector in $\calH$, ($\|h_0\|=1$). Let
$$
\mathbf{\Theta}=\Theta\cdot\Theta^\times.
$$
Then
 \begin{equation*}%\label{e-62}
\mathbf{\Theta}= \begin{pmatrix} \mathbf{T}&\mathbf{K}&\ 1\cr \calH\oplus\calH& &\dC\cr
\end{pmatrix}
\end{equation*}
where $\mathbf{T}$ is of the form \eqref{e-48-bT} and hence, taking into account \eqref{e-66-T12},
\begin{equation}\label{e-65-bT}
  \begin{aligned}
   \mathbf{T}\left[
               \begin{array}{c}
                 h_1 \\
                 h_2 \\
               \end{array}
             \right]&=\left(
                       \begin{array}{cc}
                         T & 2iKK^* \\
                         0 & T^\times \\
                       \end{array}
                     \right)\left[ \begin{array}{c}
                 h_1 \\
                 h_2 \\
               \end{array}
             \right]=\left[
               \begin{array}{c}
                 T h_1+2i KK^*h_2 \\
                 T^\times h_2 \\
               \end{array}
             \right]\\
             &=\left[
               \begin{array}{c}
                 \lambda_0 h_1+2i  \sqrt{(\IM\lambda_0)^2}\,h_2\\
                 -\bar\lambda_0 h_2 \\
               \end{array}
             \right]
             =\left[
               \begin{array}{c}
                 \lambda_0 h_1+(\lambda_0-\bar\lambda_0)\,h_2\\
                 -\bar\lambda_0 h_2 \\
               \end{array}
             \right],
      \end{aligned}
   \end{equation}
for all $h_1,h_2\in\calH$. Also, (see \eqref{e-50-bK})
\begin{equation}\label{e-67-bK}
    \mathbf{K}c=\left[
               \begin{array}{c}
                 K c \\
                 K c\\
               \end{array}
             \right]=\left[
               \begin{array}{c}
                 (\sqrt{\IM\lambda_0}\, c) h_0 \\
                (\sqrt{\IM\lambda_0}\, c) h_0\\
               \end{array}
             \right].
\end{equation}
The transfer function $W_{\mathbf{\Theta}}(z)$ of this coupling can be found using Theorem \ref{unitar} with formulas \eqref{e-91-mult} and \eqref{e-7-63}
$$
W_{\mathbf{\Theta}}(z)=W_{\Theta}(z)\cdot W_{\Theta^\times}(z)=\frac{\bar\lambda_0-z}{\lambda_0-z}\cdot\frac{\lambda_0+z}{\bar\lambda_0+z}
=\frac{|\lambda_0|^2-(\lambda_0-\bar\lambda_0)z-z^2}{|\lambda_0|^2+(\lambda_0-\bar\lambda_0)z-z^2}
$$
or
\begin{equation}\label{e-68}
    W_{\mathbf{\Theta}}(z)=\frac{|\lambda_0|^2-2i(\IM\lambda_0) z-z^2}{|\lambda_0|^2+2i(\IM\lambda_0)z-z^2}.
\end{equation}
The impedance function $V_{\mathbf{\Theta}}(z)$ of this coupling can be found using \eqref{e6-3-6}. We have
$$
\begin{aligned}
V_{\mathbf{\Theta}}(z) &= i \frac{W_{\mathbf{\Theta}} (z) - 1}{W_{\mathbf{\Theta}}(z) + 1}=
i \frac{\frac{|\lambda_0|^2-2i(\IM\lambda_0) z-z^2}{|\lambda_0|^2+2i(\IM\lambda_0)z-z^2} - 1}{\frac{|\lambda_0|^2-2i(\IM\lambda_0) z-z^2}{|\lambda_0|^2+2i(\IM\lambda_0)z-z^2} + 1}\\
&=i \frac{|\lambda_0|^2-2i(\IM\lambda_0)z-z^2-|\lambda_0|^2-2i(\IM\lambda_0)z+z^2}{|\lambda_0|^2-2i(\IM\lambda_0) z-z^2+|\lambda_0|^2+2i(\IM\lambda_0)z-z^2}=i\frac{-2i(\IM\lambda_0)z}{|\lambda_0|^2-z^2},
\end{aligned}
$$
or
\begin{equation}\label{e-69-V}
    V_{\mathbf{\Theta}}(z)=\frac{2(\IM\lambda_0)z}{|\lambda_0|^2-z^2}.
\end{equation}

In particular,
\begin{equation}\label{e-70-Vi}
    V_{\mathbf{\Theta}}(i)=\frac{2(\IM\lambda_0)}{|\lambda_0|^2+1}\,i,
\end{equation}
and hence the impedance function $V_{\mathbf{\Theta}}(z)$ always belongs to one of the bounded Donoghue classes discussed in Section \ref{s3}. For instance, if $\lambda_0=i$, then
$$V_{\mathbf{\Theta}}(z)\in \widehat{\sM}.$$

\begin{remark}\label{zepi2}
%{}{
Taking into account that the impedance function  $V_{\mathbf{\Theta}}(z)$ for  the coupling of two elementary systems $\Theta$ and $\Theta^\times$ is expressed as
$$
  V_{\mathbf{\Theta}}(z)=\frac{2(\IM\lambda_0)z}{|\lambda_0|^2-z^2}=\frac{\Im \lambda_0}{|\lambda_0|-z}+\frac{\Im \lambda_0}{
 ( -|\lambda_0|)-z},
  $$
  we can use  Remark \ref{zepi} to draw the following conclusion:  the function
  $$
  Z_{\bf \Theta}(p)=\frac1iV_{\mathbf{\Theta}}(ip)
  $$
  coincide with the impedance of the parallel $LC$-circuit with
  $$L=\frac{\IM\lambda_0}{|\lambda_0|^2}=\Im \left (-\frac1{\lambda_0}\right )
  \quad\text{and}\quad
  C=\frac{1}{\IM\lambda_0},
  $$
  the inductance and capacitance, respectively. The discussion above sheds light on the``internal''  structure of the  sequential connection of oscillatory $LC$-circuits mentioned in  Remark \ref{zepi}:
   each element of such a circuit  has the impedance of the coupling of two elementary systems.
 %}
\end{remark}

The following Corollary immediately follows from Theorems \ref{t-13} and \ref{t-15}.
\begin{corollary}\label{c-16}
Let an L-system $\Theta$ be  of the form \eqref{e-4-34} with the c-entropy $\calS$ and dissipation coefficient $\calD$. Let also $\Theta^\times$ be the corresponding to $\Theta$ skew-adjoint L-system  of the form \eqref{e-7-61}.
Then the c-entropy $\calS(\mathbf{\Theta})$ of the coupling $\mathbf{\Theta}=\Theta\cdot\Theta^\times$ is
\begin{equation}\label{e-69}
    \calS(\mathbf{\Theta})=2\calS.
\end{equation}
Moreover, the dissipation coefficient $\calD(\mathbf{\Theta})$ of this coupling is
\begin{equation}\label{e-70}
    \calD(\mathbf{\Theta})=2\calD-\calD^2.
\end{equation}
\end{corollary}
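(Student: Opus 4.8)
The plan is to recognize that $\mathbf{\Theta}=\Theta\cdot\Theta^\times$ is, by construction, a coupling of two L-systems in the precise sense of Definition \ref{d-11}, so that the two preceding theorems apply verbatim and the corollary reduces to bookkeeping. First I would note that $\Theta^\times$ is itself an admissible factor: its main operator $T^\times h=-\bar\lambda_0 h$ is again a multiplication operator of the form \eqref{e-d-4-30}, now with defining number $-\bar\lambda_0$, whose imaginary part $\IM(-\bar\lambda_0)=\IM\lambda_0>0$ matches that of $\Theta$. Hence $\Theta^\times$ is an L-system of the form \eqref{e-4-34}, and the coupling $\mathbf{\Theta}=\Theta\cdot\Theta^\times$ falls squarely under the hypotheses of Theorem \ref{t-13}.

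With this observed, the c-entropy assertion \eqref{e-69} follows in two lines. Applying the additivity Theorem \ref{t-13} to the coupling $\mathbf{\Theta}=\Theta\cdot\Theta^\times$ gives $\calS(\mathbf{\Theta})=\calS(\Theta)+\calS(\Theta^\times)$. By Theorem \ref{t-15} the c-entropy of a skew-adjoint L-system coincides with that of the original, so $\calS(\Theta^\times)=\calS(\Theta)=\calS$, and substituting yields $\calS(\mathbf{\Theta})=2\calS$.

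For the dissipation coefficient I would invoke the defining relation \eqref{e-69-ent-dis}, namely $\calD=1-e^{-2\calS}$, applied to $\mathbf{\Theta}$. Using $\calS(\mathbf{\Theta})=2\calS$ just established, one computes
$$
\calD(\mathbf{\Theta})=1-e^{-2\calS(\mathbf{\Theta})}=1-e^{-4\calS}=1-\bigl(e^{-2\calS}\bigr)^2=1-(1-\calD)^2=2\calD-\calD^2,
$$
which is \eqref{e-70}. (Equivalently, the coupling dissipation formula \eqref{e-72-coupling} with $\calD_1=\calD_2=\calD$, the equality $\calD(\Theta^\times)=\calD(\Theta)$ being the second half of Theorem \ref{t-15}, gives $\calD(\mathbf{\Theta})=\calD+\calD-\calD\cdot\calD=2\calD-\calD^2$ directly.)

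There is no genuine obstacle here, since the entire content is already packaged in Theorems \ref{t-13} and \ref{t-15}; the only point deserving a moment's care is the verification in the first paragraph that $\Theta^\times$ is a legitimate coupling factor, so that Theorem \ref{t-13} is applicable rather than merely the formal additivity of c-entropy.
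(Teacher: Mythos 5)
Your proof is correct and follows exactly the route the paper intends: the corollary is stated there as an immediate consequence of Theorems \ref{t-13} and \ref{t-15}, combined with the relation \eqref{e-69-ent-dis} (equivalently \eqref{e-72-coupling}) for the dissipation coefficient. Your added check that $\Theta^\times$ is a legitimate coupling factor (since $\IM(-\bar\lambda_0)=\IM\lambda_0>0$) is a reasonable extra precaution but does not change the argument.
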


Table \ref{Table-2} summarizes the results from Sections  \ref{s4}--\ref{s7} that relate to c-entropy and dissipation coefficient for the L-systems of the form \eqref{e-4-34} with $\lambda_0=x+i y$. In this table we assume that the L-systems $\Theta$, $\Theta_1$,  $\Theta_2$ of the form \eqref{e-4-34} have c-entropies $\calS$, $\calS_1$, $\calS_2$ and the dissipation coefficients $\calD$, $\calD_1$, $\calD_2$, respectively.

\begin{table}[ht]
\centering
\begin{tabular}{|c|c|c|c|}
\hline
 &  &  &\\
 \textbf{L-system}& \textbf{c-entropy} $\calS$  & \textbf{Dissipation}  & \textbf{Theorems}  \\
  &  & \textbf{coefficient} $\calD$&\\ \hline%\hline
% &  &  &\\
&  &  &\\
  $\Theta$ & $\ln\sqrt{\frac{x^2+(1+y)^2}{x^2+(1-y)^2}}$ & $\frac{4y}{x^2+(1+y)^2}$ &Theorems \ref{t-8}\\
% &  &  &\\
  &  &  & and \ref{t-9}\\  \hline
%  &  &  &\\
% &  & &\\
 &  &  &\\
 $\Theta^\times$&  $\ln\sqrt{\frac{x^2+(1+y)^2}{x^2+(1-y)^2}}$&  $\frac{4y}{x^2+(1+y)^2}$ &Theorem \ref{t-15}\\
 &  &  & \\
  % &  & &\\
 % &  &  &\\
  \hline
% &  &  &\\
%&  & &\\
 &  &  &\\
 $\Theta=\Theta_1\cdot\Theta_2$  &  $\calS=\calS_1+\calS_2$& Formula \eqref{e-59-D} &Theorems \ref{t-13}\\
  &  &  & and \ref{t-14}\\
   %&  & & \\
    % &  &  &\\
     \hline
      &  &  &\\
 $\mathbf{\Theta}=\Theta\cdot\Theta^\times$  & $\calS(\mathbf{\Theta})=2\calS $ & $\calD(\mathbf{\Theta})=2\calD-\calD^2$ & Corollary \ref{c-16}\\
  &  &  & \\
   %&  & & \\
    % &  &  &\\
     \hline
     \multicolumn{1}{l}{} & \multicolumn{1}{l}{} & \multicolumn{1}{l}{} & \multicolumn{1}{l}{}
\end{tabular}
\caption{c-Entropy and Dissipation coefficient  with $\lambda_0=x+i y$}
\label{Table-2}
\end{table}

\section{Examples}

In this section we present  examples that illustrate the construction of L-system of the form \eqref{e-7-61} for different values of $\lambda_0$.

\subsection*{Example 1}\label{ex-1}

Let $\lambda_0=i$ and consider the linear operator $T$ of the form \eqref{e-d-4-30} acting on  an arbitrary one-dimensional Hilbert space with an inner product $(\cdot,\cdot)$ and a normalized basis vector $h_0\in \calH$, ($\|h_0\|=1$). Then
\begin{equation}\label{e-ex1-72}
    T h=i\, h,\quad h\in\calH.
\end{equation}
Clearly,
$$
T^* h=(-i)\, h,\quad\IM T h= h,\quad\textrm{ and }\quad \RE T h=0 h=0,\quad \forall h\in\calH.
$$
We are going to include $T$ into an L-system $\Theta$ of the form \eqref{e-d-4-30}. In order to do that we take an operator $K:\dC\rightarrow\calH$ of the form
\begin{equation}\label{e-ex1-73}
   K c= c\, h_0,\quad \forall c\in\dC.
\end{equation}
Then, the adjoint operator $K^*:\calH\rightarrow\dC$ is
\begin{equation}\label{e-ex1-74}
   K^* h=(h, h_0),\quad \forall h\in\calH.
\end{equation}
Consequently,
$
K\,K^* h=h.
$
We are constructing an L-system of the form \eqref{e-d-4-30}
\begin{equation}\label{e-ex1-75}
    \Theta= \begin{pmatrix} T&K&\ 1\cr  \calH & &\dC\cr \end{pmatrix},
\end{equation}
where operators $T$ and $K$ are defined by \eqref{e-ex1-72}--\eqref{e-ex1-74}, respectively. Using \eqref{e-4-35}  we have
\begin{equation}\label{e-ex1-76}
    W_\Theta (z)=I-2iK^\ast (T-zI)^{-1}K=\frac{-i-z}{i-z}=\frac{z+i}{z-i}.
\end{equation}
The corresponding impedance function is easily found using \eqref{e-4-36}
\begin{equation}\label{e-ex1-77}
    V_\Theta (z)=K^\ast (\RE T - zI)^{-1} K=-\frac{1}{z}.
\end{equation}
Clearly, $V_\Theta (z)\in\widehat{\sM}$. Also, the c-entropy $\calS$ of the L-system $\Theta$ in \eqref{e-ex1-75} is infinity while the dissipation coefficient $\calD$ is $1$. Moreover, since $i=-\bar i$, the skew-adjoint system $\Theta^\times$ in this case coincides with $\Theta$ making it ``self-skew-adjoint".

\subsection*{Example 2}\label{ex-2}

Following the steps of Example 1 we let $\lambda_0=1+i$ and consider the linear operator $T$ of the form \eqref{e-d-4-30} acting on  an arbitrary one-dimensional Hilbert space with an inner product $(\cdot,\cdot)$ and a normalized basis vector $h_0\in \calH$, ($\|h_0\|=1$). Then
\begin{equation}\label{e-ex2-72}
    T h=(1+i)\, h,\quad h\in\calH.
\end{equation}
Clearly,
$$
T^* h=(1-i)\, h,\quad\IM T h= h,\quad\textrm{ and }\quad \RE T h=h,\quad \forall h\in\calH.
$$
We are going to include $T$ into an L-system $\Theta$ of the form \eqref{e-d-4-30}. In order to do that we take an operator $K:\dC\rightarrow\calH$ of the form
\begin{equation}\label{e-ex2-73}
   K c= c\, h_0,\quad \forall c\in\dC.
\end{equation}
Then, the adjoint operator $K^*:\calH\rightarrow\dC$ is
\begin{equation}\label{e-ex2-74}
   K^* h=(h, h_0),\quad \forall h\in\calH.
\end{equation}
Consequently,
$
K\,K^* h=h.
$
We are constructing an L-system of the form \eqref{e-d-4-30}
\begin{equation}\label{e-ex2-75}
    \Theta= \begin{pmatrix} T&K&\ 1\cr  \calH & &\dC\cr \end{pmatrix},
\end{equation}
where operators $T$ and $K$ are defined by \eqref{e-ex2-72}--\eqref{e-ex2-74}, respectively. Using \eqref{e-4-35}  we have
\begin{equation}\label{e-ex2-76}
    W_\Theta (z)=I-2iK^\ast (T-zI)^{-1}K=\frac{1-i-z}{1+i-z}.
\end{equation}
The corresponding impedance function is easily found using \eqref{e-4-36}
\begin{equation}\label{e-ex2-77}
    V_\Theta (z)=K^\ast (\RE T - zI)^{-1} K=\frac{1}{1-z}.
\end{equation}
The c-entropy $\calS$ of the L-system $\Theta$ in \eqref{e-ex2-75} is found via \eqref{e-80-entropy-def} and \eqref{e-ex2-76} as follows
\begin{equation}\label{e-ex2-84}
    \calS=-\ln (|W_\Theta(-i)|)=-\ln\left|\frac{1}{1+2i}\right|=\ln (\sqrt{5})=\frac{1}{2}\ln5.
\end{equation}
The corresponding coefficient of dissipation $\calD$ is (see \eqref{e-69-ent-dis})
\begin{equation}\label{e-ex2-85}
\calD=1-e^{-2\cS}=1-e^{-2(\frac{1}{2}\ln5)}=1-\frac{1}{5}=\frac{4}{5}.
\end{equation}
Note, that even though the L-systems \eqref{e-ex1-75} and \eqref{e-ex2-75} share the same value of $\IM\lambda_0$ and (as a result) channel operators $K$, the c-entropy in Example 2 is finite.

Now let us construct the skew-adjoint system $\Theta^\times$. By the definition \eqref{e-d-60} we have
\begin{equation}\label{e-ex2-86}
    T^\times h=-\overline{(1+i)}\, h=(-1+i)\, h,\quad h\in\calH
\end{equation}
and the L-system
\begin{equation}\label{e-ex2-87}
    \Theta^\times= \begin{pmatrix} T^\times&K&\ 1\cr  \calH & &\dC\cr \end{pmatrix},
\end{equation}
where $K$ is defined by \eqref{e-ex2-73} is skew-adjoint to $\Theta$. Using \eqref{e-7-63} we have
\begin{equation}\label{e-ex2-88}
    W_{\Theta^\times} (z)=I-2iK^\ast (T^\times-zI)^{-1}K=\frac{1+i+z}{1-i+z}.
\end{equation}
The corresponding impedance function is easily found using \eqref{e-7-64} and is
\begin{equation}\label{e-ex2-89}
    V_{\Theta^\times} (z)=K^\ast (\RE T^\times - zI)^{-1} K=-\frac{1}{1+z}.
\end{equation}
Both L-systems $\Theta$ and $\Theta^\times$ share c-entropy $\calS$ and dissipation coefficient $\calD$ values given by \eqref{e-ex2-84} and \eqref{e-ex2-85}, respectively.

%%%%%%%%%%%%%%%%%

\end{document}